\documentclass[a4paper,10pt]{article}
\usepackage[utf8]{inputenc}
\usepackage{fullpage}
\usepackage[affil-it]{authblk}
\usepackage{amsmath,amsfonts,amsthm,amssymb}
\usepackage{hyperref}
\usepackage{color}
\usepackage{appendix}
\usepackage{marginnote}
\usepackage{graphicx}
\usepackage{psfrag}
\usepackage{bm}
\usepackage{stmaryrd}
\usepackage{trimclip}
\usepackage[table]{xcolor}

\usepackage{pgfplots}

 \usepackage{algorithm} 
\usepackage{algpseudocode}

\usepackage{enumitem}

\usepackage{booktabs}
\usepackage{multirow}

\makeatletter
\DeclareRobustCommand{\shortto}{%
  \mathrel{\mathpalette\short@to\relax}%
}

\newcommand{\short@to}[2]{%
  \mkern2mu
  \clipbox{{.5\width} 0 0 0}{$\m@th#1\vphantom{+}{\shortrightarrow}$}%
  }
\makeatother
\usepackage{hyperref,url,color}
\usepackage{algpseudocode}
\usepackage{tikz}
\usetikzlibrary{fit,arrows,automata,backgrounds,calendar,chains,matrix,mindmap,patterns,petri,shadows,shapes.geometric,shapes.misc,spy,trees,calc,intersections}
\usepackage{wrapfig}
\usepackage{multirow}
\pgfplotsset{compat=1.18} 

\usepackage{bbm}
\newcommand{\tikznode}[2]{\relax
\ifmmode%
  \tikz[remember picture,baseline=(#1.base),inner sep=0pt] \node (#1) {$#2$};
\else
  \tikz[remember picture,baseline=(#1.base),inner sep=0pt] \node (#1) {#2};%
\fi}
\tikzset{box around/.style={
    draw,rounded corners,
    inner sep=2pt,outer sep=0pt,
    node contents={},fit=#1
},      
}

\newtheorem{thm}{Theorem}[section]
\newtheorem{prop}[thm]{Proposition}
\newtheorem{corl}[thm]{Corollary}

\newtheorem{rmrk}[thm]{Remark}

\theoremstyle{definition}
\newtheorem{definition}{Definition}[section]
\newtheorem{example}{Example}[section]

\newcommand\x{\bullet}
\newcommand\bigzero{\makebox(0,0){\text{\huge0}}}

\newcommand{\BigO}[1]{\mathcal{O}\left(#1\right)}
\renewcommand{\H}{\mathcal{H}}

\renewcommand{\v}{\textbf{v}}
\newcommand{\q}{\textbf{q}}

\newcommand{\Invt}{\boldsymbol{\mathcal{I}}_\textsc{T}}
\newcommand{\Invf}{\boldsymbol{\mathcal{I}}_\textsc{F}}
\newcommand{\Ptf}{\boldsymbol{\mathcal{P}}_\textsc{TF}}
\newcommand{\Pul}{\boldsymbol{\mathcal{P}}_\textsc{UL}}

\newcommand{\Ptt}{\boldsymbol{\mathcal{P}}_\textsc{TT}}
\newcommand{\Pff}{\boldsymbol{\mathcal{P}}_\textsc{FF}}
\renewcommand{\P}{\textbf{P}}

\algblock{ParFor}{EndParFor}
\algnewcommand\algorithmicparfor{\textbf{parfor}}
\algnewcommand\algorithmicpardo{\textbf{do}}
\algnewcommand\algorithmicendparfor{\textbf{end\ parfor}}
\algrenewtext{ParFor}[1]{\algorithmicparfor\ #1\ \algorithmicpardo}
\algrenewtext{EndParFor}{\algorithmicendparfor}

\title{Combinatorial and Recurrent Approaches for Efficient Matrix Inversion: Sub-cubic algorithms leveraging Fast Matrix products}

\author[1,2]{Mohamed Kamel RIAHI\thanks{correspondence should be sent to mohamed.riahi@ku.ac.ae / riahi.mk@gmail.com}}
\affil[1]{Department of Mathematics, Khalifa University of Science and Technology, P. O. Box 127788, Abu Dhabi, UAE}
\affil[2]{Emirates Nuclear Technology Center (ENTC), Khalifa University of Science and
Technology (KU)}

\date{July 2023}

\begin{document}

\maketitle

\begin{abstract}
In this paper, we introduce novel fast matrix inversion algorithms that leverage triangular decomposition and recurrent formalism, incorporating Strassen's fast matrix multiplication. Our research places particular emphasis on triangular matrices, where we propose a novel computational approach based on combinatorial techniques for finding the inverse of a general non-singular triangular matrix. Unlike iterative methods, our combinatorial approach for (block) triangular-type matrices enables direct computation of the matrix inverse through a nonlinear combination of carefully selected combinatorial entries from the initial matrix. This unique characteristic makes our proposed method fully parallelizable, offering significant potential for efficient implementation on parallel computing architectures. While it is widely acknowledged that combinatorial algorithms typically suffer from exponential time complexity, thus limiting their practicality, our approach demonstrates intriguing features that allow the derivation of recurrent relations for constructing the matrix inverse. By combining the (block) combinatorial approach, with a recursive triangular split method for inverting triangular matrices, we develop potentially competitive algorithms that strike a balance between efficiency and accuracy.
To establish the validity and effectiveness of our approach, we provide rigorous mathematical proofs of the newly presented method. Additionally, we conduct extensive numerical tests to showcase its applicability and efficiency. Furthermore, we propose several innovative numerical linear algebra algorithms that directly factorize the inverse of a given general matrix. These algorithms hold immense potential for offering preconditioners to accelerate Krylov subspace iterative methods and address large-scale systems of linear equations more efficiently.

The comprehensive evaluation and experimental results presented in this paper confirm the practical utility of our proposed algorithms, demonstrating their superiority over classical approaches in terms of computational efficiency. Our research opens up new avenues for exploring advanced matrix inversion techniques, paving the way for improved numerical linear algebra algorithms and the development of effective preconditioners for various applications.
\end{abstract}

\bigskip
\textbf{Keywords}: Combinatorial for matrix inversion, Fast inversion Algorithm, Strassen's method, Recurrent algorithms, and Triangular Factorization.\bigskip\\
\textbf{Mathematics Subjclass Classification [2022]}{\, 15A09,  15A23, 65F05, 68R05}
\newpage
\tableofcontents
\section{Introduction}
In recent years, there has been significant research focused on developing efficient and scalable techniques for matrix inversion, which is a fundamental operation in linear algebra. Matrix inversion plays a critical role in various fields, including science and engineering, where it is used to solve systems of linear equations, calculate determinants, eigenvalues, and eigenvectors, and perform other essential computations. However, traditional matrix inversion methods can be computationally expensive and impractical for large-scale problems. Therefore, the development of fast and efficient algorithms for matrix inversion is crucial, particularly for applications involving large matrices.

Over the years, several research papers have offered insight into the ongoing efforts to improve matrix inversion and multiplication efficiency. Since the late sixties, Strassen \cite{strassen1969gaussian} proposed the first fast approach to multiply two square matrices, which induces through divide and conquer approach to find the inverse of a matrix in less than $5.64 n^{\log_2(7)}$ time complexity. A short later, Strassen made a further development \cite{STRASSEN1987406443} that led to reducing the exponent complexity. Coppersmith and Winograd \cite{coppersmith1987matrix} benefited from the idea. Furthermore, Davie and Stothers improved the results in \cite{stothers2010complexity}. Davie \cite{davie2013improved} extends the method used by Coppersmith and Winograd to derive an upper bound of $\omega < 2.37369$ for the exponent of complexity. 

Later Vassilevska Williams \cite{williams2012multiplying} made a further improvement, which recently got beaten by Duan, Wu, and Zhou in \cite{duan2022faster}. The latter work relies on an asymmetric hashing method, and it is the fastest method for matrix multiplication as of today. It is worth noting that the above improvement benefited from tensor formatting calculations. 


The non-singular matrix inverse is fundamentally interrelated to the matrix-matrix product. This interconnection is clear if one, for example, considers the block decomposition of a given non-singular matrix and considers the Schur complement for the inverse calculation. Several methods have been developed and benefiting from matrix block matrix decomposition \cite{BaiTheSpectral1997, PETKOVIC2009270}, and provided valuable insights into the challenges and potential solutions for matrix inversion in real-world applications. 

Besides, in the context of Matrix inverse and Generalized matrix inverse, Petkovi{\'c}  et. al in \cite{PETKOVIC2009270} introduced a recursive algorithm for the generalized Cholesky factorization of a given symmetric, positive semi-definite matrix. They used the Strassen method for matrix inversion along with the recursive Cholesky factorization algorithm resulting in better running times while the matrix multiplication is considered to consume time complexity $\mathcal{O}(n^3)$. In \cite{stanimirovic2008successive} Stanimirovi{\'c} et. al. introduced a successive matrix squaring algorithm for approximating outer generalized inverses of a given matrix with a prescribed range and null space. 

Additionally, high-performance computing contributed to further enhancing the acceleration of the proposed algorithms for matrix inversion, for instance, using graphic processing units as in Sharma et al. \cite{sharma2013fast} who redesigned the classical Gauss-Jordan algorithm to optimize matrix inversion. In the ultra-large-scale HouZhen et al. in \cite{wang2020method} discussed the challenge of inverting matrices, particularly in the domain of cryptography. The paper proposes a parallel distributed block recursive computing method based on Strassen's method, which can process matrices at a significantly increased scale.
Moreover, in the context of sparse matrices over finite fields, a recent research paper by Casacuberta et. al
\cite{casacuberta2021faster} proposed an improvement to the current best running time for matrix inversion, achieving an expected $\mathcal{O}(n^{2.2131})$  time using fast rectangular matrix multiplication. The paper generalizes the inversion method to block-structured matrices with other displacement operators and strengthens the upper bounds for explicit inversion of block Toeplitz-like and block Hankel-like matrices. 

In \cite{Amestoyetal2015} Amestoy et. al exploited the sparsity within the resulting blocks of multiple right-hand sides in the computation of multiple entries of the inverse of a large sparse matrix in a massively parallel setting. The matrix is assumed to be already factorized by a direct method and the factors are distributed. 

Likewise, in the previous works, our paper focuses on the speed of matrix inversion, which directly affects the overall computational cost. Nonetheless, our approach is completely different from the above. Although, it can benefit from any advancement made in the matrix-matrix product in terms of algorithms and software. In practice, we shall discuss a new technique of inversion of the non-singular triangular matrix and use combinatorics to fill in directly the entries of the matrix inverse. 

Triangular matrices are among the type of matrices that are of particular interest in the basic linear algebra theory. This goes from the elementary Gauss row elimination to produce a reduced echelon form matrix to matrix factorization such as the famous $QR$ and the $LU$ matrix decomposition methods. The nature of the triangular matrices automatically suggests direct forward/backward substitution, whether to solve a given linear system of equations (with possibly multiple right-hand sides) or to invert the matrix through the Gauss-Jordan operation. Both ways are, indeed, sequential and it is hard to fill in any arbitrary entries in the resulting inverse matrix without pre-computation of the row below or above.

Inverse QR factorization has been adopted in \cite{ahmad2012incomplete} using Givens's rotation and dropping strategy for incomplete factorization. In \cite{ Bollhofer200386} the author incorporates information about the inverse factors $L^{-1}$  and $U^{-1}$  to efficiently produce Incomplete $ILU$ factorization, this method constructs robust preconditioners \cite{wathen2015preconditioning}.


Our main contribution consists of providing a range of numerical linear algebra algorithms specifically designed for the inverse factorization of non-singular square matrices. The central focus of these algorithms revolves around the utilization of triangular decomposition. Notably, we introduce a novel and pioneering combinatorial-based approach that allows for the direct inversion of triangular matrices. This approach is particularly advantageous in its block version, as it leverages recurrence to expedite the computational burden by reducing the number of sub-blocks involved. By incorporating this recurrence mechanism, our algorithms demonstrate accelerated performance, enabling efficient computation of matrix inverses. A comprehensive analysis of the time complexity for our proposed algorithms was conducted, taking into account the utilization of Strassen's matrix by matrix product within the recurrence. Our analysis revealed a significantly reduced coefficient in the sub-cubic complexity compared to other techniques that employ Strassen's fast method \cite{bunch1974triangular}. This demonstrates the computational efficiency and effectiveness of our algorithms. By leveraging Strassen's matrix multiplication, we achieve improved time complexity, making our algorithms highly competitive for matrix inversion tasks.   



 The rest of the paper is organized as follows, in Section\ref{sec:motivation} we provide the motivation for our approach in tackling the challenges associated with triangular matrices. We then present, in Section \ref{sec:combinatorix}, our novel approach \textbf{COMBRIT} to the inverse triangular matrix using combinatorics on the indices of the entry of the matrix. Later in Section \ref{sec:recursive} we introduce a recursive technique to speed up the computation of the inverse of the non-singular triangular matrix. In Section \ref{sec:InverseSquareMatrix}, we investigate the use of the proposed inverse triangular method to decompose the inverse of the matrix directly. Within this section, we propose two new algorithms, namely, \textbf{SQR} and \textbf{SKUL} for the inverse decomposition of $QR$, and $LU$ respectively as described in subsection \ref{subsec:inversedecomposition}. Additionally, in subsection \ref{algo:rsi} we introduce a novel technique for triangular decomposition ($LU$ or $UL$) based on recurrent split and recurrent fast triangular inversion. The numerical tests and implementations are reported in Section \ref{sec:numerical}. Finally, we close this paper with some concluding remarks.  

\section{Motivation}\label{sec:motivation}
The study of matrix linear algebra is fundamental in many areas of science and technology, and one of the key concepts within this field is the use of triangular matrices. These matrices are momentous because they deliver a simpler and more efficient way to solve systems of linear equations. In particular, when a matrix is triangular, its solution can be easily computed through back-substitution. This can be especially useful when dealing with large systems of equations, where the complexity of the problem can quickly become overwhelming. Additionally, triangular matrices offer an elegant way to compute determinants and eigenvalues, two important mathematical concepts used extensively in many scientific and engineering disciplines. The fact that the product of two triangular matrices is also a triangular matrix further highlights the usefulness of this concept in matrix multiplication. Furthermore, the $LU$ factorization, which is a powerful technique for solving systems of linear equations, relies heavily on the use of triangular matrices. We also find the triangular matrices in the $QR$ decomposition, which is very useful in the calculation of least square solutions and also in finding the eigendecomposition of a given matrix.  

The importance of triangular matrices in linear algebra cannot be overstated, as they provide a powerful tool for simplifying and solving complex mathematical problems.

In this work, we focus on the triangular matrices and provide novel concepts on utilizing such a specific format toward accelerating the computation of the inverse of general non-singular matrices. 

Let us consider the following $n\times n$ unitary upper triangular matrix $R$
\begin{equation}\label{defT}
    T_{i,j}=\begin{cases}
        1  &\hbox{ if } i=j \\
        T_{i,j} &\hbox{ if } j>i \\
        0 &\hbox{ otherwise.}
    \end{cases}
\end{equation}

Our focus will be on the Triangular matrix described in Eq.\eqref{defT}, which defines an upper triangular matrix. For the lower triangle matrix, we just consider the transpose operator and all of the results in this paper apply. Furthermore, Eq.\eqref{defT} considers leading ones writing in $T$, this specific writing is important for our study, and a generalization of triangular matrices could be straightforwardly done through appropriate diagonal matrix multiplication. 

\section{The Combinatorics triangular matrix inversion}\label{sec:combinatorix}

\begin{definition}
The Hopscotch-series $\H(a_{0},a_{n+1}),\: 0<a_{0}\leq a_{n+1}$ is defined as a collection of non-cyclic sequences with fixed integer endpoints $a_{0}$ and $a_{n+1}$ at the left and right ends, respectively. These sequences are sorted in increasing order. To construct the series, we consider all possible sorted sequences formed by removing at least $k$ integers, where $k$ is greater than or equal to $a_{n+1}-a_{0}-1$, from the complete sequence containing all integers from $a_0$ to $a_{n+1}$.
\begin{equation}
    \H(a_{0},a_{n+1})=\left\lbrace \langle a_{0}, a_{1},a_{2}, ,\dots, a_{n+1} \rangle,\langle a_{0},a_{2},a_{3},\dots,a_{n+1}\rangle,\langle a_{0}, a_{4},\dots,a_{j},\dots,a_{n}, a_{n+1}\rangle, \dots, \langle a_{0}, a_{n+1} \rangle \right\rbrace.
\end{equation}
Note that 
$$\H(a_0,a_0)=\emptyset.$$
\end{definition}
\begin{example}\label{examplek15}
Examples of sequences for the $\H(1,5)$ series read:
\begin{equation*}
\begin{array}{llc}
k_{1\shortto 5}= 1 &,  \langle \alpha^{k}_1,\alpha^{k}_2,\alpha^{k}_3,\alpha^{k}_4,\alpha^{k}_5\rangle = \langle 1,2,3,4,5\rangle &, \ell^{1}_{\star}=5   \\
k_{1\shortto 5}=2 &, \langle \alpha^{k}_1,\alpha^{k}_2,\alpha^{k}_3,\alpha^{k}_4\rangle = \langle 1,3,4,5\rangle &, \ell^{2}_{\star}=4 \\
k_{1\shortto 5}=3 &, \langle \alpha^{k}_1,\alpha^{k}_2,\alpha^{k}_3,\alpha^{k}_4\rangle = \langle 1,2,4,5\rangle &, \ell^{3}_{\star}=4 \\
k_{1\shortto 5}=4 &, \langle \alpha^{k}_1,\alpha^{k}_2,\alpha^{k}_3,\alpha^{k}_4\rangle = \langle 1,2,3,5\rangle &, \ell^{4}_{\star}=4 \\
k_{1\shortto 5}=5 &, \langle \alpha^{k}_1,\alpha^{k}_2,\alpha^{k}_3\rangle = \langle 1,4,5 \rangle  &, \ell^{5}_{\star}=3 \\
k_{1\shortto 5}=6 &, \langle \alpha^{k}_1,\alpha^{k}_2,\alpha^{k}_3\rangle = \langle 1,3,5\rangle   &,  \ell^{6}_{\star}=3 \\
k_{1\shortto 5}=7 &, \langle \alpha^{k}_1,\alpha^{k}_2,\alpha^{k}_3\rangle = \langle 1,2,5 \rangle  &, \ell^{7}_{\star}=3 \\
k_{1\shortto 5}=8 &, \langle \alpha^{k}_1,\alpha^{k}_2\rangle = \langle 1,5 \rangle    &, \ell^{8}_{\star}=2 
\end{array}
\end{equation*}
Here, $\ell_{\star}^{k_{i\shortto j}}$ stands for the cardinal of element in the sequence $\H(i,j)$.
\end{example}

\begin{prop}\label{corrTranlation}
    For two integers $0<a_{}<b_{}$, the total number of the Hopscotch-sequences 
     $$\H_{\#}(a_{},b_{})=2^{b_{}-a_{}-1}.$$
\end{prop}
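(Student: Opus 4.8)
The plan is to count, for fixed endpoints $a<b$, the number of admissible sorted sequences $\langle a, \dots, b\rangle$ that appear in $\H(a,b)$. The key observation is that every such sequence is uniquely determined by the subset of "interior" integers it retains from the full range $\{a, a+1, \dots, b\}$. Since the endpoints $a$ and $b$ are always present and the sequence is sorted, a Hopscotch-sequence is in bijection with a subset $S \subseteq \{a+1, a+2, \dots, b-1\}$: namely, the sequence is $\langle a \rangle$ followed by the elements of $S$ in increasing order, followed by $\langle b \rangle$. The set of interior candidates $\{a+1,\dots,b-1\}$ has exactly $b-a-1$ elements.

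First I would verify that the condition in the definition — removing at least $k \geq a_{n+1}-a_0-1$ integers from the complete run — is exactly the statement that at least $b-a-1$ of the total $b-a+1$ integers are removed, equivalently that the two endpoints are always kept and the interior is arbitrary. Indeed, removing $k$ integers with $k \geq b-a-1$ from a set of size $b-a+1$ leaves at most $2$ elements; combined with the requirement that the two extreme values $a,b$ are fixed endpoints, this forces exactly the endpoints plus some (possibly empty) subset of the $b-a-1$ interior values. So the count of Hopscotch-sequences equals the number of subsets of a $(b-a-1)$-element set, which is $2^{b-a-1}$.

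Next I would do a quick consistency check against Example~\ref{examplek15}: for $\H(1,5)$ the interior set is $\{2,3,4\}$, which has $2^{3}=8$ subsets, matching the eight listed sequences; and $\H(a_0,a_0)=\emptyset$ is consistent since with $a=b$ there are no sequences with two distinct endpoints (the formula $2^{-1}$ is simply outside the intended range $a<b$). I would then state the bijection cleanly and conclude $\H_{\#}(a,b)=2^{b-a-1}$.

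The main obstacle is not mathematical depth but interpretive: the definition as written is somewhat informal (the phrase "removing at least $k$ integers, where $k \geq a_{n+1}-a_0-1$" and the ellipsis-laden display), so the real work is pinning down precisely which sequences are admissible and arguing that the "at least $k$" constraint does not secretly exclude the full sequence $\langle a, a+1,\dots,b\rangle$ (it does not, since $k$ ranges from its minimum value upward, and the minimum removal of $b-a-1$ integers can be realized while keeping any interior subset of the complementary size — in particular removing exactly the $b-a-1$ non-endpoint, non-retained elements). Once the bijection with subsets of $\{a+1,\dots,b-1\}$ is justified, the enumeration is immediate.
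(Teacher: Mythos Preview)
Your proposal is correct and follows essentially the same approach as the paper: both identify a Hopscotch-sequence with a choice of which interior integers $\{a+1,\dots,b-1\}$ to retain, yielding $2^{b-a-1}$ possibilities. The paper phrases this as the binomial sum $\sum_{k}\binom{n}{k}=2^{n}$ (with $n=b-a-1$), whereas you invoke the power-set bijection directly, but the underlying idea is identical.
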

\begin{proof}
    The concept is quite simple. Let us consider $a=a_{0}$ and $b=a_{n+1}$, where  in between these two integers values we have $n$ other integers greater than $a$ and lower than $b$. Consider then the set $\lbrace a_{1},a_{2},\dots, a_{n-1},a_{n}\rbrace$ of all the integers in between $a$ and $b$. By excluding the fixed endpoints from the set we are left with $n$ integer numbers. The Hopscotch series may then be taken as all possible (sequences) combinations that consist of taking-off (or hiding) $k$ element from $n-2$ for all $k \in \lbrace 1,2, \dots,n\rbrace$. Operations count 
    \begin{equation}
        \H_{\#}(a_{0},a_{n+1})= \sum_{k=1}^{n} \binom{n}{k}  =  2^{n}.
    \end{equation}
\end{proof}
\begin{corl}\label{Translation}
    For any positive integers $a_{0},a_{n+1}$ and $\xi$, we have
    $$\H(a_{0}+\xi,a_{n+1}+\xi) = \xi + \H(a_{0},a_{n+1})$$
\end{corl}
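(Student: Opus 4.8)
The plan is to produce an explicit bijection between the two Hopscotch-series and to verify that it is precisely the coordinate-wise shift by $\xi$, so that the notational identity $\H(a_0+\xi,a_{n+1}+\xi)=\xi+\H(a_0,a_{n+1})$ holds as an equality of sets. Here I read $\xi+\H(a_0,a_{n+1})$ as the collection obtained by adding $\xi$ to every entry of every sequence in $\H(a_0,a_{n+1})$; equivalently, letting $\tau_\xi$ denote the map $\langle b_1,b_2,\dots,b_m\rangle \mapsto \langle b_1+\xi,b_2+\xi,\dots,b_m+\xi\rangle$ on finite integer sequences, the right-hand side is $\{\tau_\xi(s) : s\in\H(a_0,a_{n+1})\}$.

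First I would restate the definition in the cleanest equivalent form: a sequence $s$ lies in $\H(a_0,a_{n+1})$ if and only if $s$ is strictly increasing, its first entry is $a_0$, its last entry is $a_{n+1}$, and all entries lie in the integer interval $[a_0,a_{n+1}]$ — equivalently, $s$ is obtained from the full run $\langle a_0,a_0+1,\dots,a_{n+1}\rangle$ by deleting an arbitrary subset of the interior integers $\{a_0+1,\dots,a_{n+1}-1\}$. Then I would check that $\tau_\xi$ preserves each of these features: it is order-preserving (so strictly increasing sequences go to strictly increasing sequences), it sends the first entry $a_0$ to $a_0+\xi$ and the last entry $a_{n+1}$ to $a_{n+1}+\xi$, and it carries the interior set $\{a_0+1,\dots,a_{n+1}-1\}$ bijectively onto $\{a_0+\xi+1,\dots,a_{n+1}+\xi-1\}$. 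Hence $\tau_\xi$ restricts to a well-defined map $\H(a_0,a_{n+1})\to\H(a_0+\xi,a_{n+1}+\xi)$, with the evident inverse supplied by $\tau_{-\xi}$, so it is a bijection; the image therefore equals the target, which is the claim. As a sanity check, Proposition~\ref{corrTranlation} already forces both sides to have cardinality $2^{a_{n+1}-a_0-1}$, so the only content to be supplied is the bijectivity of the shift, not any counting.

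There is essentially no genuine obstacle here; the only points requiring minor care are (i) translating the slightly informal phrasing of the definition (``removing at least $k$ integers, $k\ge a_{n+1}-a_0-1$'') into the clean statement ``delete any subset of the interior of $[a_0,a_{n+1}]$'', and (ii) being explicit that the endpoint-preservation of $\tau_\xi$ is what pins down the translate as opposed to some other relabeling of the index set. If a fully formal argument is wanted, one can phrase it purely set-theoretically: deletion patterns on the interior of $[a_0,a_{n+1}]$ correspond canonically to deletion patterns on the interior of $[a_0+\xi,a_{n+1}+\xi]$ via $t\mapsto t+\xi$, and this correspondence commutes with the operation of reading off the surviving increasing sequence, giving Corollary~\ref{Translation} at once.
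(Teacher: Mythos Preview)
Your proof is correct and follows essentially the same idea as the paper: the result is immediate from the construction of the Hopscotch series, since the coordinate-wise shift $\tau_\xi$ bijectively identifies the two families. The paper's own proof is a single sentence to this effect, so your version is simply a fully fleshed-out rendering of the same translation argument.
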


\begin{proof}
The proof follows the construction method for the Hopscotch series, over a given sorted sequence $\lbrace a_{i}\rbrace_{i=0}^{n+1}$.    
\end{proof}
\begin{example}
Here we show the example of $a_0=1$, $a_{n+1}=4$ and $\xi=4$.
$$
    \begin{array}{l|l}
  \H(1,4)   & \H(5,8) \\ \hline  
  \langle 1,2,3,4\rangle &  \langle 5,6,7,8\rangle=4+\langle 1,2,3,4\rangle \\
  \langle 1,3,4\rangle &  \langle 5,7,8\rangle=4+ \langle 1,3,4\rangle  \\
  \langle 1,2,4\rangle &  \langle 5,6,8\rangle=4+\langle 1,2,4\rangle \\
  \langle 1,4\rangle &  \langle 5,8\rangle=4+\langle 1,4\rangle.
    \end{array}
 $$   
\end{example}

Following the results of Corollary \ref{Translation}, one remarks that the entries indexation in the diagonals below share the same Hopscotch sequences up to constant. 
\begin{center}
\begin{equation}\label{tranlationEq}
 \begin{pmatrix} 
 \tikznode{m-1-1}{1} & \tikznode{m-1-2}{\x} &  \tikznode{m-1-3}{\x} &\tikznode{m-1-4}{\x} & \tikznode{m-1-5}{\x} \\ 
 &1&\x&\x&\tikznode{m-2-5}{\x}\\ 
 &&1& \x& \tikznode{m-3-5}{\x}\\ 
 &\bigzero&&\tikznode{m-4-4}{1}&\tikznode{m-4-5}{\x}\\ 
 &&&&\tikznode{m-5-5}{1} 
 \end{pmatrix}.\;\end{equation}
\begin{tikzpicture}[overlay,remember picture]
     \draw[opacity=.4,line width=3mm,line cap=round] (m-1-2.center) -- (m-4-5.center);
   \draw[opacity=.4,line width=3mm,line cap=round] (m-1-3.center) -- (m-3-5.center);
      \draw[opacity=.4,line width=3mm,line cap=round] (m-1-4.center) -- (m-2-5.center);
      \draw[opacity=.4,line width=3mm,line cap=round] (m-1-5.center) -- (m-1-5.center);
\end{tikzpicture}
\end{center}
In the sequel, in order to design a combinatorial-based algorithm for the inversion of a (unitary) triangular matrix, we shall associate to a given matrix $T$ the following tensor (of sequences):
\begin{equation}
    \begin{pmatrix}
        \H(1,1) & \H(1,2) & \H(1,3) & \cdots & \cdots & \H(1,n) \\
           \empty     & \H(2,2) & \H(2,3) & \cdots & \cdots & \H(2,n) \\
           \empty     &    \empty      & \ddots & \ddots & \ddots & \vdots \\
           \empty     &   \empty      &    \empty     &    \ddots  & \ddots   & \vdots \\
           \empty     &   \empty      &    \empty     &    \empty  & \ddots   & \vdots \\           
           \empty     &   \empty      &    \empty     &    \empty  & \empty   & \H(n,n)           
    \end{pmatrix},
\end{equation}
Next, we define $T^{-1}$ as $S$, where each element $S_{i,j}$ in the inverse matrix is associated with its corresponding Hopscotch-series $\H(j,j)$. This association allows the evaluation of the inverse matrix to be completely independent for each element, enabling natural parallelization of the computation. Furthermore, based on the observation Eq.\eqref{tranlationEq} the time complexity of the inversion reduces further, where basically, we only need the Hopscotch series for the first row only. Unfortunately, the Eq \eqref{TMatrixInversecombinatorixFormula} 's complexity remains $\BigO{2^n}$. Nonetheless will see in the sequel section that the patterns provided by Eq.\eqref{TMatrixInversecombinatorixFormula} reduce dramatically such exponential complexity.  

Moving forward, we shall explain now how to use these combinatorial calculations, namely the Hopscotch series associated with the given triangular matrix, to evaluate its inverse directly. 
we will employ flexible notations to encompass various combinatorial possibilities. Indeed,
$$\displaystyle\prod_{\ell=1}^{\ell_{\star}^{k_{i\shortto j}}-1} T_{\alpha^{k_{i\shortto j}}_{\ell},\alpha^{k_{i\shortto j}}_{\ell+1}}$$
reads for example, when $(i,j)=(1,4)$, as follows
$$\displaystyle\prod_{\ell=1}^{\ell_{\star}^{k_{1\shortto 4}}-1} T_{\alpha^{k_{1\shortto 4}}_{\ell},\alpha^{k_{1\shortto 4}}_{\ell+1}}=\underbrace{T_{1,2}\cdot T_{2,3} T_{3,4}\cdot}_{\ell_{\star}=4}$$
Here the subscript $\alpha^{k_{1\shortto 4}}_{\ell}$ represents the index (value of the) element $\ell$ in the Hopscotch sequence $k_{1\shortto 4}$, see example \ref{examplek15}.
  
\begin{thm}\label{Combinatorix}
    The matrix $S$ inverse of the matrix $T$ (defined in \eqref{defT}) writes 
    \begin{equation}\label{TMatrixInversecombinatorixFormula}
        S_{i,j}  =\begin{cases}
            \displaystyle\sum_{k_{i\shortto j}=1}^{2^{(j-i-1)}} (-1)^{\ell_{\star}^{k_{i\shortto j}}-1} \displaystyle\prod_{\ell=1}^{\ell_{\star}^{k_{i\shortto j}}-1} T_{\alpha^{k_{i\shortto j}}_{\ell},\alpha^{k_{i\shortto j}}_{\ell+1}} & \text{, if } j > i\\
            1 & \text{, if } i = j \\
            0 & \text{, otherwise}.
        \end{cases}
    \end{equation}
 where $\ell^{k_{i\shortto j}}_{\star}$ stands for the length of the $k_{i\shortto j}^{\text{th}}$ $\H(i,j)$ sequence, while $\alpha_{\ell}^{k_{i\shortto j}}$ stands for the $\ell^{\text{th}}$ element in the $k_{i\shortto j}^{\text{th}}$ $\H(i,j)$ sequence.
\end{thm}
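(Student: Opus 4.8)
The plan is to verify the formula by checking directly that $S T = I$ (equivalently $T S = I$), using the combinatorial structure of the Hopscotch series. Since $S$ and $T$ are both upper triangular with unit diagonal, it suffices to show that for every $i < j$ the $(i,j)$ entry of $S T$ vanishes, i.e.
\begin{equation*}
    \sum_{m=i}^{j} S_{i,m} T_{m,j} = 0 .
\end{equation*}
Because $T_{m,j}=0$ for $m>j$ and $T_{j,j}=1$, $S_{i,i}=1$, this reduces to showing $S_{i,j} = -\sum_{m=i+1}^{j-1} S_{i,m} T_{m,j} - T_{i,j}$, which I would prefer to read as a recurrence: the claim is exactly that the quantity defined by \eqref{TMatrixInversecombinatorixFormula} satisfies
\begin{equation*}
    S_{i,j} = -T_{i,j} - \sum_{m=i+1}^{j-1} S_{i,m}\, T_{m,j}, \qquad S_{i,i}=1.
\end{equation*}

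First I would set up an induction on $j-i$. The base cases $j-i=0$ (giving $1$) and $j-i=1$ (giving $S_{i,j}=-T_{i,j}$, since the only Hopscotch sequence in $\H(i,i+1)$ is $\langle i,i+1\rangle$ of length $2$, contributing $(-1)^{1}T_{i,i+1}$) are immediate from the definition. For the inductive step, I would take the right-hand side of the claimed recurrence, substitute the formula \eqref{TMatrixInversecombinatorixFormula} for each $S_{i,m}$ with $m<j$, and show the result equals \eqref{TMatrixInversecombinatorixFormula} for $S_{i,j}$. The key combinatorial observation is a bijection: every Hopscotch sequence $\langle \alpha_1,\dots,\alpha_{\ell_\star}\rangle \in \H(i,j)$ of length $\ell_\star \geq 2$ is uniquely obtained from a shorter sequence by appending $j$ as the last element — namely, deleting the final entry $j=\alpha_{\ell_\star}$ leaves a sequence $\langle \alpha_1,\dots,\alpha_{\ell_\star-1}\rangle \in \H(i,m)$ with $m=\alpha_{\ell_\star-1}$, and conversely appending $j$ to any sequence in $\H(i,m)$ for any $i < m < j$ yields a valid sequence in $\H(i,j)$ (this uses Proposition~\ref{corrTranlation}, since $\sum_{m=i+1}^{j-1} 2^{m-i-1} + 1 = 2^{j-i-1}$, the last $+1$ accounting for the length-one "jump" $\langle i,j\rangle$ itself — the $-T_{i,j}$ term). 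Under this bijection, a sequence $\langle \alpha_1,\dots,\alpha_{\ell_\star-1}, j\rangle$ contributes $(-1)^{\ell_\star-1}\prod_{\ell=1}^{\ell_\star-1} T_{\alpha_\ell,\alpha_{\ell+1}} = \big[(-1)^{\ell_\star-2}\prod_{\ell=1}^{\ell_\star-2} T_{\alpha_\ell,\alpha_{\ell+1}}\big]\cdot(-T_{\alpha_{\ell_\star-1},j}) = S_{i,m}\cdot(-T_{m,j})$ with $m = \alpha_{\ell_\star-1}$, where the bracketed term is recognized (by the inductive hypothesis on $\H(i,m)$) as the summand of $S_{i,m}$ indexed by the truncated sequence. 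Summing over all truncated sequences and all $m$ reassembles exactly $-\sum_{m=i+1}^{j-1} S_{i,m} T_{m,j}$, plus the isolated $\langle i,j\rangle$ term $-T_{i,j}$, which matches \eqref{TMatrixInversecombinatorixFormula}.

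The main obstacle I anticipate is making the bijection and the bookkeeping of signs and lengths fully rigorous: one must check that the map "delete the last entry" is well-defined from $\H(i,j)\setminus\{\langle i,j\rangle\}$ onto $\bigsqcup_{m=i+1}^{j-1}\H(i,m)$ and is a bijection (injectivity is clear; surjectivity requires that appending $j$ never violates the sorted, distinct-endpoints conditions, which holds because $m<j$), and that the exponent of $(-1)$ shifts correctly, i.e. a sequence of length $\ell_\star$ in $\H(i,j)$ maps to one of length $\ell_\star-1$ so that $(-1)^{\ell_\star-1} = -(-1)^{(\ell_\star-1)-1}$. A secondary point worth stating carefully is that $\H(m,j)$ for consecutive-type jumps and the translation Corollary~\ref{Translation} are not actually needed here — the recurrence is "left-anchored" at $i$ — so I would make sure not to conflate the row-recurrence with the diagonal-translation structure mentioned around \eqref{tranlationEq}. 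Once the bijection is pinned down, the algebra is routine, and the same argument run on $T S = I$ (peeling the first entry $i$ instead of the last entry $j$) gives the two-sided inverse, completing the proof.
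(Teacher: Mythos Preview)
Your proposal is correct and follows essentially the same route as the paper's proof. Both arguments reduce to verifying the recurrence $S_{i,j}=-T_{i,j}-\sum_{m=i+1}^{j-1}S_{i,m}T_{m,j}$ and then establish it via the combinatorial observation that every Hopscotch sequence in $\H(i,j)$ other than $\langle i,j\rangle$ is obtained by appending $j$ to a unique sequence in some $\H(i,m)$, with the sign shifting accordingly; the paper packages this as a block decomposition (inducting on the matrix size $n$ and reading off $S_{1,n+1}$ from $-Sv$), whereas you induct directly on $j-i$ and verify $(ST)_{i,j}=0$, but the combinatorial core and sign bookkeeping are identical.
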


\begin{proof}
The proof is conducted through induction arguments. 
For the case where $n=1,2$ the formula is trivially validated. Let's start with considering $n=3$, in which case the size of the matrix $R$ becomes $3\times 3$, we then have 
$$T^{-1}=\begin{pmatrix}
    1 & T_{1,2} & T_{1,3} \\
    0 & 1 & T_{2,3} \\
    0& 0 & 1
\end{pmatrix}^{-1} = \begin{pmatrix}
    1 & S_{1,2} & S_{1,3} \\
    0 & 1 & S_{2,3} \\
    0& 0 & 1
\end{pmatrix}.$$
Using Eq.~\eqref{TMatrixInversecombinatorixFormula} we can write 
\begin{equation*}
\begin{array}{lll}
    S_{i,i} &=& 1 \text{ for } i =\lbrace 1,2,3 \rbrace\\
    S_{1,2} &=& (-1)^{1}\cdot T_{1,2} \\
    S_{1,3} &=&  (-1)^{1} \cdot T_{1,3} + (-1)^2 \cdot T_{1,2}T_{2,3}.\\
    S_{2,3} &=& (-1)^{1} \cdot T_{2,3} \\
\end{array}
\end{equation*}
It is then trivial that $S$ stands for the inverse of $T$, hence the formula is validated for $n=1,2,3$. Let us next assume that the formula is true for an arbitrary rank $n$, to prove that the formula holds for the rank $n+1$. Consider the following matrix
\begin{table}
\centering
\begin{equation*}\label{rankNP1}
 \begin{pmatrix} 
 \tikznode{m-1-1}{1} & T_{1,2} &  \hspace{-.1in}\cdots &\tikznode{m-1-4}{\hspace{-.18in}T_{1,n}} & \tikznode{m-1-5}{T_{1,n+1}} \\ 
 0&1&\ddots&\vdots&\vdots\\ 
 0&0&1& \vdots& \vdots\\ 
 0&0&0&\tikznode{m-4-4}{1}&\tikznode{m-4-5}{T_{n,n+1}}\\ 
 \tikznode{m-5-1}{0}&0&0&\tikznode{m-5-4}{0}&\tikznode{m-5-5}{1}\\ 
\tikz[overlay,remember picture]{%
\node[blue,dash pattern=on 2pt off 1.25pt, thick,box around=(m-1-1)(m-4-4)];
\node[red,dash pattern=on 2pt off 1.25pt, thick,box around=(m-1-5)(m-4-5)];
\node[brown,dash pattern=on 2pt off 1.25pt, thick,box around=(m-5-5)];
\node[brown,dash pattern=on 2pt off 1.25pt, thick,box around=(m-5-1)(m-5-4)];
}
\end{pmatrix}\;
\end{equation*}
\end{table}
On which we use the block structure, For $R\in\mathbb{R}^{n,n}$ as defined in Eq.\eqref{defT} and $v\in\mathbb{R}^{n,1}$, we write:
\begin{equation*}
    \begin{pmatrix}
        R & v\\
        0 & 1
    \end{pmatrix}
\end{equation*}
 which has a block matrix inverse that writes 
 \begin{equation*}\label{blockS}
    \begin{pmatrix}
        S & -S\cdot v\\
        0 & 1
    \end{pmatrix}
\end{equation*}
where we have assumed that $S$ is the matrix inverse of $R$ with rank $n$. Here it turns out that we only have to prove the formula for the entry $S_{1,n+1}$ as per the Corollary \ref{corrTranlation} property (see also matrix~\eqref{tranlationEq}).

Actually, from the matrix of rank $n+1$ we can eliminate the first row and the first column to fall back into the assumption of the rank $n$ with the exact same type of matrix $\tilde R$ (new matrix) as of Eq.\eqref{defT}.

Let us now focus on the top-right corner entry of the matrix $S$, i.e. $T_{1,n+1}$. Using the notation in Eq.\eqref{blockS} $T_{1,n+1}$ is the first component of the vector resulting from the matrix-by-vector product $-S\cdot v$ that write explicitly as
\begin{equation*}
\begin{array}{lll}
    S_{1,n+1} &=& - T_{1,n+1} -\displaystyle \sum_{p=2}^{n} S_{1,p}T_{p,n+1}  \\
     &=& -T_{1,n+1} - \displaystyle\sum_{p=2}^{n} \left( \displaystyle\sum_{k_{1\shortto p}=1}^{2^{(p-2)}} (-1)^{\ell_{\star}^{k_{1\shortto p}}-1} \displaystyle\prod_{\ell=1}^{\ell_{\star}^{k_{1\shortto p}}-1} T_{\alpha^{k_{1\shortto p}}_{\ell},\alpha^{k_{1\shortto p}}_{\ell+1}} T_{p,n+1} \right)\\
     &=& (-1)^1\cdot T_{1,n+1} + \displaystyle\sum_{p=2}^{n} \left( \displaystyle\sum_{k_{1\shortto p}=1}^{2^{(p-2)}} (-1)^{\ell_{\star}^{k_{1\shortto p}}} \displaystyle\prod_{\ell=1}^{\ell_{\star}^{k_{1\shortto p}}-1} T_{\alpha^{k_{1\shortto p}}_{\ell},\alpha^{k_{1\shortto p}}_{\ell+1}} T_{p,n+1} \right)
\end{array}
\end{equation*}
Now we just focus on the finite series 
\begin{equation*}
    \begin{array}{lll}
         &&\displaystyle\sum_{p=2}^{n} \left( \displaystyle\sum_{k_{1\shortto p}=1}^{2^{(p-2)}} (-1)^{\ell_{\star}^{k_{1\shortto p}}} \displaystyle\prod_{\ell=1}^{\ell_{\star}^{k_{1\shortto p}}-1} T_{\alpha^{k_{1\shortto p}}_{\ell},\alpha^{k_{1\shortto p}}_{\ell+1}} T_{p,n+1} \right)  \\
         &=& \displaystyle\sum_{k_{1,2}=1}^{2^{0}} (-1)^{\ell_{\star}^{k_{1,2}}} \displaystyle\prod_{\ell=1}^{\ell_{\star}^{k_{1,2}}-1} T_{\alpha^{k_{1,2}}_{\ell},\alpha^{k_{1,2}}_{\ell+1}} T_{2,n+1}\\
         &&+\displaystyle\sum_{k_{1,3}=1}^{2^{1}} (-1)^{\ell_{\star}^{k_{1,3}}} \displaystyle\prod_{\ell=1}^{\ell_{\star}^{k_{1,3}}-1} T_{\alpha^{k_{1,3}}_{\ell},\alpha^{k_{1,3}}_{\ell+1}} T_{3,n+1}\\
         &&\vdots\\
         &&+\displaystyle\sum_{k_{1\shortto n}=1}^{2^{(n-2)}} (-1)^{\ell_{\star}^{k_{1\shortto n}}} \displaystyle\prod_{\ell=1}^{\ell_{\star}^{k_{1\shortto n}}-1} T_{\alpha^{k_{1\shortto n}}_{\ell},\alpha^{k_{1\shortto n}}_{\ell+1}} T_{n,n+1},
    \end{array}
\end{equation*}
which sums up as the geometric series $2^0+2^1+\cdots+2^{n-2}=2^{n-1}-1$, therefore, the finite series simplifies to 
\begin{equation*}
\begin{array}{lll}
         \displaystyle\sum_{p=2}^{n} \left( \displaystyle\sum_{k_{1\shortto p}=1}^{2^{(p-2)}} (-1)^{\ell_{\star}^{k_{1\shortto p}}} \displaystyle\prod_{\ell=1}^{\ell_{\star}^{k_{1\shortto p}}-1} T_{\alpha^{k_{1\shortto p}}_{\ell},\alpha^{k_{1\shortto p}}_{\ell+1}} T_{p,n+1} \right)  &=&   \displaystyle\sum_{k_{1\shortto p}=1}^{2^{(n-1)}-1} (-1)^{\ell_{\star}^{k_{1\shortto p}}} \displaystyle\prod_{\ell=1}^{\ell_{\star}^{k_{1\shortto p}}-1} T_{\alpha^{k_{1\shortto p}}_{\ell},\alpha^{k_{1\shortto p}}_{\ell+1}} T_{p,n+1} \\
         &=&\displaystyle\sum_{k^{\prime}_{1,p}=1}^{2^{(n-1)}-1} (-1)^{\ell_{\star}^{k^{\prime}_{1,p}-1}} \displaystyle\prod_{\ell=1}^{\ell_{\star}^{k^{\prime}_{1,p}}-1} T_{\alpha^{k^{\prime}_{1,p}}_{\ell},\alpha^{k^{\prime}_{1,p}}_{\ell+1}}
     \end{array}    
 \end{equation*}      
 Note in the latter the use of $k^{\prime}_{1,p}$ instead of $k_{1\shortto p}$, this is because the length of the Hopscotch sequence has increased by one increment. In fact, the symbol of the products computes $T_{1,\x} \cdots T_{\x,p}$, which when multiplied by $T_{p,n+1}$ gets incremented by one. This means $k_{1\shortto p}=k_{1\shortto p}^{\prime}-1$, which explains the power of the negative one that becomes in its turn $(-1)^{k_{1\shortto p}^{\prime}-1}$.
 Henceforth, it becomes clearer now that 
 \begin{equation*}
      S_{1,n+1}  = (-1)^1 \cdot T_{1,n+1}  + \displaystyle\sum_{k^{\prime}_{1,p}=1}^{2^{(n-1)}-1} (-1)^{\ell_{\star}^{k^{\prime}_{1,p}-1}} \displaystyle\prod_{\ell=1}^{\ell_{\star}^{k^{\prime}_{1,p}}-1} T_{\alpha^{k^{\prime}_{1,p}}_{\ell},\alpha^{k^{\prime}_{1,p}}_{\ell+1}}
 \end{equation*}
 which simplifies further to 
  \begin{equation*}
      S_{1,n+1}  = \displaystyle\sum_{k^{\prime}_{1,p}=1}^{2^{(n-1)}} (-1)^{\ell_{\star}^{k^{\prime}_{1,p}-1}} \displaystyle\prod_{\ell=1}^{\ell_{\star}^{k^{\prime}_{1,p}}-1} T_{\alpha^{k^{\prime}_{1,p}}_{\ell},\alpha^{k^{\prime}_{1,p}}_{\ell+1}}
 \end{equation*}
\end{proof}

\begin{example}[Direct application of the main Theorem \ref{Combinatorix}]
    Given the matrix 
    \begin{equation*} T=
        \begin{pmatrix}
        \bf{1} & \bf{2} & \bf{4} & \bf{1}\\
          & \bf{1} & \bf{3} & \bf{2}\\
          &   & \bf{1} & \bf{5}\\
          &   &   & \bf{1}
        \end{pmatrix}
    \end{equation*}
    Following Theorem \ref{Combinatorix} the matrix inverse $A^{-1}$ writes
    \begin{equation*} T^{-1}=
        \begin{pmatrix}
        1 & S_{1,2} & S_{1,3} & S_{1,4}\\
          & 1 & S_{2,3} & S_{2,4}\\
          &   & 1 & S_{3,4}\\
          &   &   & 1\\
        \end{pmatrix}
    \end{equation*}
where 
\begin{equation*}
\begin{cases}
    S_{1,2} &=-2:= -1 \times \bf{2}  \\
    S_{1,3} &=\,\,2:= -1 \times \bf{4} + \bf{2}\times \bf{3}\\
    S_{1,4} &=-7:= -1\times \bf{1} + \bf{2}\times \bf{2} 
    + \bf{4} \times{\bf{5}} \\
    &\qquad\qquad -1 \times\bf{2}\times\bf{3}\times \bf{5} \\
    S_{2,3} &=-3:= -1\times \bf{3}\\
    S_{2,4} &=13:= -1 \times \bf{2} + \bf{3}\times \bf{5} \\
    S_{3,4} &=-5:= -1\times \bf{5}.
\end{cases}
\end{equation*}
hence 
    \begin{equation*} T^{-1}=
\begin{pmatrix}1&-2&2&-7\\ &1&-3&13\\ &&1&-5\\ &&&1\end{pmatrix}
    \end{equation*}
\end{example}
\begin{corl}
    Unit triangular matrices are close under inversion. Furthermore, the unit matrices $T$ with off-diagonal integers are also closed under inversion.  
\end{corl}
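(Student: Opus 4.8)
The plan is to read off both closure statements directly from the explicit combinatorial formula in Theorem~\ref{Combinatorix}, so that essentially no new computation is required. First I would recall that for a unit upper triangular matrix $T$ as in \eqref{defT}, Theorem~\ref{Combinatorix} produces an inverse $S$ whose diagonal entries are all $1$, whose entries below the diagonal are all $0$, and whose entries above the diagonal are given by the finite Hopscotch sum \eqref{TMatrixInversecombinatorixFormula}. The structural part of the claim --- that $S$ is again unit upper triangular --- is then immediate from the three-case definition of $S_{i,j}$: the $i=j$ case gives $1$, the ``otherwise'' case gives $0$, and the $j>i$ case only populates strictly-upper positions. For the lower triangular case I would invoke the transpose remark already made after \eqref{defT}: if $L$ is unit lower triangular then $L^{\top}$ is unit upper triangular, $(L^{\top})^{-1}$ is unit upper triangular by the above, and $L^{-1}=\bigl((L^{\top})^{-1}\bigr)^{\top}$ is therefore unit lower triangular.

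For the second sentence --- closure of the class of unit triangular matrices with \emph{integer} off-diagonal entries --- the key observation is that formula \eqref{TMatrixInversecombinatorixFormula} expresses each $S_{i,j}$ as a finite sum, with coefficients $(-1)^{\ell_{\star}^{k_{i\shortto j}}-1}\in\{+1,-1\}$, of finite products of the entries $T_{\alpha_{\ell},\alpha_{\ell+1}}$. If every off-diagonal $T_{p,q}$ lies in $\mathbb{Z}$, then each such product lies in $\mathbb{Z}$, each signed term lies in $\mathbb{Z}$, and the finite sum lies in $\mathbb{Z}$; together with $S_{i,i}=1$ and $S_{i,j}=0$ for $i>j$ this shows $S=T^{-1}$ has all integer off-diagonal entries and is unit triangular. (Equivalently, one may note $\det T=1$ so $T^{-1}=\operatorname{adj}(T)$ has integer entries, but the point here is that the statement is a free corollary of the formula just proved.) The lower triangular integer case again follows by the transpose argument, since transposition preserves both integrality and the unit-triangular shape.

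I do not expect a genuine obstacle here; the only thing to be careful about is bookkeeping. Specifically, I would make sure the empty-product and empty-sum conventions are spelled out: when $j=i+1$ the inner product over $\ell$ has a single factor $T_{i,j}$ and the outer sum has the single index $k_{i\shortto j}=1$, while for $j=i$ one is in the separate diagonal case, so no degenerate sum is ever silently assumed to vanish or to equal one incorrectly. With that convention fixed, both assertions are one-line consequences of Theorem~\ref{Combinatorix}, and I would present the corollary's proof as exactly those two short deductions (structure from the case split, integrality from closure of $\mathbb{Z}$ under finite sums and products) followed by the transpose reduction for the lower triangular versions.
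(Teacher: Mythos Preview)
Your proposal is correct and follows essentially the same approach as the paper: both deduce the corollary directly from the combinatorial formula of Theorem~\ref{Combinatorix}, observing that each off-diagonal entry of $S$ is a finite $\pm1$-combination of products of the $T_{p,q}$, hence an integer when the $T_{p,q}$ are integers. Your write-up is in fact more thorough than the paper's one-line proof, which only says ``the process only involves linear combinations of integers'' and leaves the unit-triangular structural part and the lower-triangular transpose reduction implicit.
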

\begin{proof}
    The process only involves linear combinations of integers; therefore, the results should be an integer for every entry of the matrix inverse. 
\end{proof}
Obviously, and at first glance, such a formula of Theorem \ref{Combinatorix} does not uplift any programming language because of the aberrant drawback of involving exponential time complexity, i.e. $\BigO{2^n}$.
Nevertheless, the proof of Theorem \ref{Combinatorix} inspires considerable promise toward the reduction of time complexity. The recurrent patterns revealed in the proof suggest a block version that outperforms the element-wise approach. Actually by using the combinatorial with a moderate number $n$, one can proceed with the inversion recurrently. This way we get rid of the exponential time complexity. The details of the optimized algorithm are given in the next section. 

\paragraph{Notations} 
We shall consider the following assertion 
\begin{equation}\label{n-bounds}
16\cdot 2^{k} \leq n \leq m2^{k}
\end{equation}
where the natural numbers $k$ and $m$ are given as such  \cite{strassen1969gaussian}
    \begin{equation}\label{k-m}
    \begin{cases}
        k&=\left[\log_{2}(n)-4\right]\\
        m&=\left[n2^{-k}\right] + 1.
        \end{cases}
    \end{equation}
In our analysis of time complexity, the notation we will employ is as follows: 
\begin{table}[H]
    \centering
    \begin{tabular}{rll}
        $\Pff(n)$&:&  Product of two full matrices of size $n$ \\ 
        $\Pul(n)$&:&  Product of upper triangular matrix with a lower triangular matrix of size $n$\\
        $\Ptf(n)$&:& Product of a triangular matrix with a full matrix with size $n$ \\
        $\Invt(n)$&:& Inverse of a triangular matrix of size $n$ \\
        $\Invf(n)$&:& Inverse of square matrix of size $n$. \\
        $\P(n)$&:& Permutation applied from the right to a square matrix of size $n$. 
    \end{tabular}
\end{table}

\section{Fast Recursive Triangular Inversion Using Strassen's Method}\label{sec:recursive}
We present in this section, a fast method based on the combinatorics approach. This method relies on the possession of the combinatoric combination of indices up to a given rank, then the inversion of a given triangular matrix (of rank $m\beta^k$) is made possible by recursion. 
Additionally, we treat the block-recursive approach to invert any given triangular nonsingular matrix. 

For simplicity and the notation abuse issue, we shall consider  unit triangular matrices. The general case can be easily treated following the trivial decomposition $R=DT$ where $D$ is a diagonal matrix with entries $D_{i,i}=T_{i,i}$.
\begin{equation}
  \left(
    \begin{array}{ccccc}
    \x    & \x       & \x    & \x    & \x \\ 
     & \x       & \x    & \x    & \x \\ 
          &     & \x    & \x    & \x \\ 
          & \bigzero &  & \x    & \x \\ 
          &          &       &  & \x \\ 
  \end{array}\right) 
  =
  \left(
    \begin{array}{ccccc}
    \x    &        &     &     &  \\ 
     & \x       &     & \bigzero    &  \\
          &     & \x    &     &  \\ 
          & \bigzero &  & \x    &  \\ 
          &          &       &  & \x \\ 
  \end{array}\right)
  \left(
    \begin{array}{ccccc}
    1    & \x       & \x    & \x    & \x \\  
     & 1       & \x    & \x    & \x \\ 
          &     & 1    & \x    & \x \\ 
          & \bigzero &  & 1    & \x \\ 
          &          &       &  & 1 \\ 
  \end{array}\right)
\end{equation}

\subsection{Combinatorics Block-Recursive Inverse Triangular Matrix}
There is no doubt that combinatorics involves exponential time complexity to identify the right patterns used in Theorem~\ref{Combinatorix}. Although, it is worth noting that the process generated by combinatorics doesn't depend on the matrix entries! Indeed, it depends only on the indexes and tells us what the appropriate indices involved in a calculation are. Therefore, this process can be done in an offline fashion, as it is unique regardless of the  triangular matrix to be handled. For this reason, we assume that we possess a combinatorics card ready before the inversion. For Generalization purposes, let us assume that such a combinatorics card is given up to an index $\beta$. This means that using such a combinatorics card: i) we are able to inverse any triangular matrix (modulo transpose) of size $m\beta\times m\beta$, and ii) If we have to deal with a larger size matrix we can find the first $\beta$-off-diagonal-band in the inverse matrix $S$ using the translation property as described in Corollary \eqref{Translation}.   

\begin{equation}\label{ScketchCard} T = 
\begin{pmatrix} 
\tikznode{m-1-1}{1}&\tikznode{m-1-2}{\x}&\tikznode{m-1-3}{\x}&\tikznode{m-1-4}{\x}&\tikznode{m-1-5}{\x}&\tikznode{m-1-6}{\x}&\tikznode{m-1-7}{\x}&\tikznode{m-1-8}{\x}&\tikznode{m-1-9}{\x}\\ 
                   &\tikznode{m-2-2}{1}&\tikznode{m-2-3}{\x}&\tikznode{m-2-4}{\x}&\tikznode{m-2-5}{\x}&\tikznode{m-2-6}{\x}&\tikznode{m-2-7}{\x}&\tikznode{m-2-8}{\x}&\tikznode{m-2-9}{\x}\\ 
                   &                    &\tikznode{m-3-3}{1}&\tikznode{m-3-4}{\x}&\tikznode{m-3-5}{\x}&\tikznode{m-3-6}{\x}&\tikznode{m-3-7}{\x}&\tikznode{m-3-8}{\x}&\tikznode{m-3-9}{\x}\\ 
                   &                    &                   &\tikznode{m-4-4}{1}&\tikznode{m-4-5}{\x}&\tikznode{m-4-6}{\x}&\tikznode{m-4-7}{\x}&\tikznode{m-4-8}{\x}&\tikznode{m-4-9}{\x}\\ 
                   &                    &                   &                   &\tikznode{m-5-5}{1}&\tikznode{m-5-6}{\x}&\tikznode{m-5-7}{\x}&\tikznode{m-5-8}{\x}&\tikznode{m-5-9}{\x}\\ 
                   &                    &                   &                   &                   &\tikznode{m-6-6}{1}&\tikznode{m-6-7}{\x}&\tikznode{m-6-8}{\x}&\tikznode{m-6-9}{\x}\\ 
                   &                    &                   &                   &                   &                   &\tikznode{m-7-7}{1}&\tikznode{m-7-8}{\x}&\tikznode{m-7-9}{\x}\\ 
                   &                    &                   &                   &                   &                   &                   &\tikznode{m-8-8}{1}&\tikznode{m-8-9}{\x}\\ 
                   &                    &                   &                   &                   &                   &                   &&\tikznode{m-9-9}{1}\\ 
 \end{pmatrix}
 \begin{tikzpicture}[overlay,remember picture]
\draw[opacity=.4,,fill] (m-1-1.center) -- (m-1-3.center) -- (m-3-3.center) -- (m-1-1.center);
\draw[opacity=.4,,fill] (m-4-4.center) -- (m-4-6.center) -- (m-6-6.center) -- (m-4-4.center);
\draw[opacity=.4,,fill] (m-7-7.center) -- (m-7-9.center) -- (m-9-9.center) -- (m-7-7.center);
\draw[opacity=.4,fill] (m-1-4.center) -- (m-1-6.center) -- (m-3-6.center) -- (m-3-4.center) ;
\draw[opacity=.4,fill] (m-1-7.center) -- (m-1-9.center) -- (m-3-9.center) -- (m-3-7.center) ;
\draw[opacity=.4,fill] (m-4-7.center) -- (m-4-9.center) -- (m-6-9.center) -- (m-6-7.center);
 \end{tikzpicture}
\end{equation}
Eq.\eqref{ScketchCard}  illustrates a schematic representation of a sizable matrix measuring $m\beta\times m\beta$ with $m=3$ and $\beta=3$. Thus with this view, instead of inverting the whole matrix $9\times9$, we shall use the card with $\beta=3$. 

\subsubsection{COMBRIT: A Combinatorics Block-recursive Inversion algorithm}

Algorithm \ref{algo:COMBRITE}, utilizes the combinatorial card to invert a given triangular matrix $T$ using block-wise matrix operations. The workflow concept of the recursion is very simple as it reduces the matrix size (supposed $n=m\beta^{k}$) recursively until it reaches the base $m$. Whenever it meets inversion instruction the algorithm recalls itself again with the appropriate subblocks.

As input and regardless of the superscript $k$ the algorithm will consider each time that $n=m'\beta:=(m\beta^{k-1})\beta$. 
The input matrix $T$ is assumed to be a square matrix with a block structure, where each block is of size  m by  m. The number of blocks in each row and column of A is specified by the parameter $\beta$, which stands for the size of the combinatorial card.

The method first initializes several tensors (multidimensional arrays) of size $ m' \times m' \times \beta \times\beta$ to store intermediate results. These arrays include $\mathbb{T}$, $\mathbb{B}$, $i\mathbb{D}$, $i\mathbb{T}$, and ${T}^{-1}$. The algorithm then loops over each pair of block indices in $A$ and extracts the corresponding blocks into the $\mathbb{A}$ array. Next, the algorithm performs a block-wise matrix inversion operation on the diagonal blocks of $A$, by recalling the algorithm itself, then stores the resulting inverse matrices in the $i\mathbb{D}$ array. Further, the algorithm then uses the inverse diagonal blocks to compute the off-diagonal blocks of $\mathbb{B}$.

Finally, the algorithm computes the inverse of the $\mathbb{B}$ array through the use of the combinatorial card for block matrices following theorem \ref{Combinatorix} (in its block version). At this stage, the algorithm may benefit from fast matrix multiplication methods. The resulting inverse of $\mathbb{B}$ is then used to compute the inverse of $\mathbb{T}$ block-wise and store the result in the $i\mathbb{T}$ array.

The tensor $i\mathbb{A}$ is then unfolded to reconstruct $\mathbb{A}$ in two dimension array (i.e. regular matrix format).

\begin{algorithm}[H]
\begin{algorithmic}[1]
\If{size($A$) $\leq$ m}
    \State $A^{-1} \gets$ \text{Direct Application of the element-wise Combinatorial approach Theorem\ref{Combinatorix}}\;
\Else
    \State $ m' \gets$ size of blocks in $A$
    \State $\beta \gets$ number of blocks in each row and column of $A$
    \State $\mathbb{Z} \gets$ zeros($ m'$, $ m'$, $\beta$, $\beta$)
    \State $\mathbb{A} \gets$; $\mathbb{Z}$\; $\mathbb{B} \gets$ $\mathbb{Z}$
           $i\mathbb{D} \gets$ $\mathbb{Z}$\;$i\mathbb{A} \gets$ $\mathbb{Z}$\;
    \State $A^{-1} \gets$ zeros($ m' \times \beta$, $ m' \times \beta$)
    \For{$i = 1$ to $\beta$}
        \For{$j = i$ to $\beta$}
            \State $\mathbb{A}(:,:,i,j) \gets A(1+(i-1)\times  m':i \times  m',1+(j-1)\times  m:j \times  m')$
        \EndFor
    \EndFor
    \For{$iv = 1$ to $\beta$}
        \State $i\mathbb{D}(:,:,i,i) \gets \left(\mathbb{A}(:,:,i,i)\right)^{-1}$ 
        \Comment{ Recall \textbf{COMBRIT} recursively }\;
        \State\Comment{or any preferred method for the matrices with block $m$. See Remark \ref{rmkCOMBRIT}}
        \For{$j = i+1$ to $\beta$}
        \State $\mathbb{B}(:,:,i,j) \gets i\mathbb{D}(:,:,i,i) \times \mathbb{A}(:,:,i,j)$
        \EndFor
        \State $\mathbb{B}(:,:,i,i) \gets$ $I_{\bf{s}\times m'}$
    \EndFor
    \State $i\mathbb{B} \gets \mathbb{B}^{-1}$ 
        \Comment{Direct application of theorem \eqref{Combinatorix}) in block version with card size $m'$}\;
    \For{$j = 1$ to $\beta$}
        \For{$i = 1$ to $j$}
            \State $i\mathbb{A}(:,:,i,j) \gets i\mathbb{B}(:,:,i,j) \times i\mathbb{D}(:,:,j,j)$
        \EndFor
    \EndFor
    \State $A^{-1} \gets$ Unfold the tensor ($i\mathbb{A}$)
\EndIf
\State \textbf{Return} $A^{-1}$\;
\end{algorithmic}
\caption{\textbf{COMBRIT}: Combinatorial-based Block-Recursive method.} \label{algo:COMBRITE}
\end{algorithm}

\begin{rmrk}\label{rmkCOMBRIT}
The \textbf{COMBRIT} algorithm offers a versatile approach by providing flexibility in choosing the inversion method for the triangular diagonal blocks. A notable feature of this algorithm is the incorporation of a recursive iteration, achieved by recalling the function itself. This recursive iteration enables the reduction of the size of the triangular matrix that needs to be inverted.

By employing this recursive strategy, the \textbf{COMBRIT} algorithm efficiently handles the inversion of triangular matrices by iteratively solving smaller subproblems. At each iteration, the size of the triangular matrix decreases, leading to a step-by-step computation of the inverse. This recursive approach allows for a systematic and structured inversion process, facilitating the efficient handling of larger triangular matrices. 
\end{rmrk}
\subsubsection{Asymptotic time complexity analysis}
It is clear that in the matrix inversion using our combinatorial-based approach, the top right corner of the matrix is the worst element in terms of demanding time complexity.

Let's consider a unitary triangular matrix $T$ of size $n\times n\equiv m\beta^{k}\times m\beta^{k}$. We assume that we hold the card of combinatorics that helps to invert the triangular matrix up to the order $\beta$. This card provides the Hopscotch series. 

For the small block of size $m\times m$, the evaluation of the element $S_{1,j}, j\geq 3$, in the matrix inverse, requires the Hopscotch series $\H(1,j)$ , in which count 
\begin{equation}\label{addition:card}
\mathcal{S}(j):=\sum_{\ell=0}^{j-2} \binom{j-i-1}{\ell} = 2^{j-2}
\end{equation}
sequences. The evaluation of $S_{1,j}$ sums up over the sequences of the Hopscotch series, where each sequence implies $(j-2)$ multiplications. The total multiplication operations count then 
\begin{equation}\label{multiplication:card}
   \mathcal{M}(j):=\sum_{\ell=0}^{j-2} \binom{j-2}{\ell} \left( j-\ell\right) = 2^{j-3} j
\end{equation}

With regards to Corollary \ref{Translation} results (i.e. diagonal bands elements have the same complexity), the total complexity for inverting a unitary matrix counts
\begin{eqnarray*}
\Invt^{\textbf{COMBRIT}^{\star}}(m)&:=& \sum_{j=2}^{m} (m-j+1)\left(\mathcal{M}(j)  + \mathcal{S}(j) \right) \\
&=& \sum_{j=2}^{m} (m-j+1) (j+2) 2^{j-3} \\ 
&=& \frac{m}{2}(2^{m}-2)
\end{eqnarray*}

Additional $\mathcal{D}(m):=m(m-1)/2$ division is required to transform a general triangular matrix to a unitary one. Therefore, the total complexity for any triangular matrix writes
\begin{equation}
\Invt^{\textbf{COMBRIT}}(m) = \frac{m}{2}(2^{m}+m-1)
\end{equation}



At first glance, the above time complexity is worse than the known $\BigO{m^3}$. Although, it is worth noting that using the block structure based on the off-line card we can inverse any matrix of size $m\beta^k, m\geq2, \beta\geq2, k>1$. Furthermore, the combinatorial-based method is highly parallelizable, where the computation of every single element in the matrix inverse $S$ is totally independent of their counterpart in $S$.

In a parallel setting, it should be noted that when dealing with a general triangular matrix and utilizing $\beta$
 processors, we have the capability to invert block diagonal triangular matrices of size $m$, either using the card itself or employing any desired inversion technique. Additionally, in a parallel manner, performing a row-wise block multiplication of the primary matrix $T$ with its diagonal inverse yields the following structure.
\begin{equation}\label{Tunitary}
\begin{pmatrix}
 I& B_{1,2} & $\dots$ &  B_{1,\beta} \\
    & I & $\dots$& $\vdots$ \\
 &  & I & B_{\beta-1,\beta}\\
 &&&I \end{pmatrix}.\tag{$T^\star$}
\end{equation}

Next, we will examine the time complexity analysis for the inversion of a general triangular matrix $T$. This inversion process consists of two main steps: first, inverting the diagonal blocks and multiplying them (from the right) by their respective rows in $T$ to achieve a unitary matrix structure \ref{Tunitary}; second, utilizing combinatorial techniques to invert $\beta$ blocks. This inversion process is recursively repeated for each matrix until all matrices have been inverted.

\begin{eqnarray*}
    &&\Invt^{\textbf{COMBRIT}}(m\beta^{k+1})\\
    &:=& \beta \Invt^{\textbf{COMBRIT}}(m\beta^{k}) + \frac{\beta}{2}(\beta-1) \Pff(m\beta^{k})  + \sum_{j=2}^{\beta} (\beta-j+1)\left(\mathcal{M}(j) \Pff(m\beta^{k})  + \mathcal{S}(j) m^{2}\beta^{2k}\right) \\
    &=& \beta \Invt^{\textbf{COMBRIT}}(m\beta^{k}) + \frac{\beta}{2}(\beta-1) \Pff(m\beta^{k})  + \sum_{j=2}^{\beta} (\beta-j+1)\left(j\cdot 2^{j-3} \Pff(m\beta^{k})  + 2^{j-2} m^{2}\beta^{2k}\right)  \\
&=& \beta \Invt^{\textbf{COMBRIT}}(m\beta^{k}) + \left( 2^\beta - \beta-1\right) m^{2}\beta^{2k}  + \frac{1}{2} \left(\left(2^{\beta}-1\right)\left(\beta-2\right)+\beta^{2}\right) \Pff(m\beta^{k}) \\    
&=& \beta^{k+1} \Invt^{\textbf{COMBRIT}}(m) + \left( 2^\beta - \beta-1\right) m^{2}\sum_{\ell=0}^{k} \beta^{2k-\ell}  \\
&&+\frac{1}{2}\left(\left(2^{\beta}-1\right)\left(\beta-2\right)+\beta^{2}\right)\sum_{\ell=0}^{k} \beta^{\ell}  \Pff(m\beta^{k-\ell}) \\
&=& \beta^{k+1} \Invt^{\textbf{COMBRIT}}(m) + m^{2}\left(\:2^{\beta }-\:\beta -1\right)\frac{\left(\beta ^k\:\left(\beta ^{k+1}-1\right)\right)}{\beta -1}  \\
&&+\frac{1}{2}\left(\left(2^{\beta}-1\right)\left(\beta-2\right)+\beta^{2}\right)\sum_{\ell=0}^{k} \beta^{\ell}  \Pff(m\beta^{k-\ell}). 
\end{eqnarray*}
It is worth noting that the dominant effect, particularly in higher-order terms, is governed by the convex function $\frac{1}{2}\left(\left(2^{\beta}-1\right)\left(\beta-2\right)+\beta^{2}\right)$. This function exhibits an increasing trend as $\beta$ increases, except at $\beta=1$ and $\beta=2$, where it becomes zero. Choosing $\beta=1$ is not interesting since it results in $n$ always being equal to $m$ for which we don't benefit from the recursion. On the other hand, selecting $\beta=2$ leads to $n=m2^{k}$. This choice minimizes complexity by avoiding matrix multiplications. Nonetheless, in the sequel, we shall allow $\beta$ to reach moderate values such as $\beta=2^1,2^2,2^3$ and $2^4$ to release the combinatorial calculation and let them enjoy potentially enjoy its parallel nature.   

However, for larger values of $\beta>2$ (with $n=\beta^{k}$), matrix-matrix multiplications come into play. In such cases, one can rely on the sub-cubic complexity of fast Strassen's method. Incorporating the parallel nature of the earlier combinatorial-based approach alongside this reduction in complexity can potentially result in rapid convergence towards the inverse.

 Although the design of parallel algorithms for combinatorial-based methods can be complex and beyond the scope of the current study, their effectiveness becomes evident in a parallel computing environment. In this study, our emphasis will be on traditional recursive approaches, through which we aim to design straightforward yet efficient matrix inversion algorithms.

Moreover, in many applications, matrices are typically sparse or even banded. In such cases, the \textbf{COMBRIT} method could offer reduced complexity compared to standard methods. Specifically, the number of non-zero elements will further diminish the complexity. However, a comprehensive exploration of these topics, especially those involving sparsity, falls beyond the scope of the present work.

\subsection{Column-Recursive Inversion Triangular Matrix Algorithm}
\subsubsection{CRIT algorithm}
The algorithm we employ in this case is based on classical linear algebra principles for block matrices. It is worth mentioning that this approach may not introduce any significant novelty to our work, it would be rather used as a reference for numerical comparison purposes. However, as demonstrated in the proof of our main theorem, there is a strong relationship between both methods, namely classical linear algebra, and combinatorics. This connection opens up a new avenue for exploring the potential combination of these approaches, particularly considering the high level of parallelization achievable with the combinatorics method. 

\begin{equation}
 \begin{pmatrix} 
 1 & \tikznode{m-1-2}{\x} &  \x &\tikznode{m-1-4}{\x} & \tikznode{m-1-5}{\x} \\ 
 &1&\x&\x&\tikznode{m-2-5}{\x}\\ 
 &&1&\x&\x\\ 
 &\bigzero&&1&\tikznode{m-4-5}{\x}\\ 
 &&&&1\\ 
 \end{pmatrix}\;
\tikz[overlay,remember picture]{%
\node[blue,dash pattern=on 2pt off 1.25pt, thick,box around=(m-1-2)(m-1-4)];
\node[brown,dash pattern=on 2pt off 1.25pt, thick,box around=(m-1-5)];
\node[red,dash pattern=on 2pt off 1.25pt, thick,box around=(m-2-5)(m-4-5)];}
\end{equation}

Put simply, the efficient inversion of a unitary matrix, specifically the optimized version, can be accomplished by a straightforward computation. Each element $S_{i,j}$ of the resulting inverse matrix is obtained by taking the scalar product of the row (represented by a blue encircled line) from the newly computed block inverse with the column vector (represented by a red encircled column) from the original matrix is inverted, and then subtracting the corresponding element $T_{i,j}$. These processes are illustrated in the following pseudo-algorithm (Algorithm \ref{algoIT}).

  \begin{minipage}{0.35\textwidth}
    \centering
    \begin{algorithm}[H]
\label{algoIT}
\begin{algorithmic}[1]
\For{$i=1$ to $n-1$}
    \State $S_{i,i} \gets 1$
    \For{$j=i+2$ to $n$}
          \State $S_{i,j} \gets - S_{i,i:j-1} \cdot T_{i:j-1,j} $ 
    \EndFor
\EndFor
\State $S_{n,n}\gets 1$ \& \textbf{return} $S$  
\end{algorithmic}
\caption{CRIT$^\star$}
\end{algorithm}
  \end{minipage}%
  \hfill
  \begin{minipage}{0.58 \textwidth}
    \centering
    \begin{algorithm}[H]
    \begin{algorithmic}[1]
  \For{$i \gets 1$ to $n$}
    \State $S(i,i) \gets 1/R(i,i)$
    \For{$j \gets i+1$ to $n$}
      \State $S(i,j) \gets - S(i,i:j-1) \cdot R(i:j-1,j)/R(j,j)$
    \EndFor
  \EndFor
  \State \textbf{return} $S$
\end{algorithmic}\caption{\textbf{CRIT}}
\end{algorithm}
  \end{minipage}
 
\bigskip

The CRIT$^\star$ algorithm computes the inverse of an $n \times n$ lower unitary triangular matrix $T$. The algorithm produces an $n \times n$ matrix $S$ such that $RS=I$, where $I$ is the identity matrix. The time complexity of the \textbf{CRIT} algorithm is clearly $O(n^3)$, which we analyze thoroughly in the following section. 

Roughly speaking, the outer loop of the algorithm runs $n-1$ times, and for each iteration of the outer loop, the inner loop runs $n-i$ times. Within each iteration of the inner loop, scalar multiplication and a dot product are performed. The former operation takes constant time while the latter takes $O(n)$. Hence, the time complexity of each iteration of the inner loop is $O(n)$. Therefore, the time complexity of the inner loop is $O(n^2)$, and the time complexity of the outer loop is $O(n^3)$. The final assignment statement outside the loops takes constant time, so the overall time complexity of the algorithm is $O(n^3)$.

\subsubsection{CRIT asymptotic time complexity analysis}

The CRIT algorithm is an extension of the CRIT$^\star$ algorithm designed to handle non-singular triangular matrices. The key distinction lies in the step where we transform a non-singular triangular matrix into a unitary triangular matrix by dividing each element by its corresponding diagonal element. It is at this specific step that the CRIT algorithm is applied.

The time complexity of the inversion of a unitary triangular matrix, of order $\beta\geq 2$, demands 
\begin{equation}\label{phimult}
    \varphi_{[\times]}(\beta):=\sum_{i=1}^{\beta-1}\sum_{j=i+2}^{\beta} (j-i-1)  = \frac{\beta^3-3\beta^2+2\beta}{6}=\frac{\beta \left(\beta -1\right)\left(\beta -2\right)}{6}
\end{equation}
multiplication, and 
\begin{equation}\label{phiaddsub}
\varphi_{[\pm]}(\beta):=\sum_{i=1}^{\beta-1}\sum_{j=i+2}^{\beta} (j-i-2)= \frac{\beta^3 - 6 \beta^2 + 11 \beta -6}{6}=\frac{\left(\beta -1\right)\left(\beta -2\right)\left(\beta -3\right)}{6},
\end{equation}
additions and subtractions. Hence, the inversion of the triangular matrix of order $m$ requires
\begin{equation}
\Invt^\textbf{CRIT}(m) = \frac{m(m+1)}{2} + \varphi_{[\times]}(m) + \varphi_{[\pm]}(m)  = \frac{m^3-3m^2+8m-3}{3}
\end{equation}
flops operations (multiplications and addition combined). 

Besides, The product $\Ptf$ of two matrices (i.e. an upper-triangular matrix times a full matrix) of order m, while promoting the sparsity of the triangular matrix reads 
\begin{equation}\label{ptfm} 
\Ptf(m)=m\sum_{i=1}^{m} (2i-1)=m^{3}.
\end{equation}
flops operations (multiplications, and addition combined).

On the other hand, the time complexity of $\Ptf(m\beta^k)$ for block matrices of order $m\beta^k$ enjoys the following recursion formula, where the order of the matrices $\beta$ is supposed as $\alpha$-multiple of $2$, i.e. $\beta=2^{\alpha}$ 
\begin{eqnarray}
    \Ptf(m\beta^{k}) 
    &=& 4\Ptf(m\beta^{k}/2) + 2 \Pff(m\beta^{k}/2) + m^{2}\beta^{2k}/2\notag,\\
    &=& 4^{\alpha k} \Ptf(m) + 2\sum_{r=0}^{\alpha k-1} 4^{r}\Pff(m2^{\alpha k-r-1})+\sum_{r=0}^{\alpha k-1}4^{r} 2\left(\frac{m2^{\alpha k}}{2^{r+1}}\right)^{2},\notag\\
    &=&\left(\frac{2 m^{3}+\alpha k m^{2}}{2}  \right)4^{\alpha k} + 2\sum_{r=0}^{\alpha k-1} 4^{r}\left((5+2m)m^{2}7^{\alpha k-r-1}-6(m2^{\alpha k-r-1})^2\right),\notag\\
    &=& \left(\frac{2 m^{3}+\alpha k m^{2}}{2}  \right)4^{\alpha k}  + \frac{2m^2 (5 + 2 m)}{3} (7^{k \alpha} -4^{k \alpha} )  - 3\cdot 4^{k \alpha} k m^2 \alpha \notag\\
    &=& \frac{2m^2 (5 + 2 m)}{3}\: 7^{\alpha k} 
    +\left( \frac{2 m^{3}+\alpha k m^{2}}{2} 
    -\frac{2m^2 (5 + 2 m)}{3} - 3 k m^2 \alpha \right) 4^{k \alpha}
    \notag\\ 
    &=& \frac{2m^2 (5 + 2 m)}{3}\: 7^{\alpha k} 
    -\left(  \frac{m^2\left(2m+15k\alpha+20\right)}{6} \right) 4^{\alpha k} \notag\\
    &=& \left(\frac{2m^2 (5 + 2 m)}{3} - \frac{m^2\left(2m+15k\alpha+20\right)}{6} \left(\frac{4}{7}\right)^{\alpha k} \right)\: 7^{\alpha k} \label{PTFmbk}
\end{eqnarray}
Now, we can consolidate these formulas Eqs.\eqref{phimult}-\eqref{PTFmbk} to calculate the overall complexity of inverting a non-singular triangular matrix, which can be summarized as follows.
\begin{eqnarray}
&&\Invt^\textbf{CRIT}(m\beta^{k+1}) \notag\\
&=& \beta \Invt^\textbf{CRIT}(m\beta^{k}) + \beta(\beta-1) \Ptf(m\beta^{k}) + \varphi_{[\times]}(\beta)\Pff(m\beta^{k})+\varphi_{[\pm]}(\beta) \left(m\beta^{k}\right)^{2}\notag\\
&=& \beta^{k+1} \Invt^\textbf{CRIT}(m) + \beta(\beta-1)\sum_{j=0}^{k} \beta^{j} \Ptf(m\beta^{k-j}) \notag\\
&&+ \varphi_{[\times]}(\beta) \sum_{j=0}^{k} \beta^{j}\Pff(m\beta^{k-j})+\varphi_{[\pm]}(\beta)m^{2}\sum_{j=0}^{k} \beta^{2k-j}\notag\\
&=& 2^{\alpha(k+1)} \frac{m^3-3m^2+8m-3}{3}  + \varphi_{[\pm]}(2^{\alpha})m^{2}\frac{2^{\alpha k}\left(2^{\alpha(k+1)}-1 \right)}{2^{\alpha}-1 } \notag\\
&&+ \varphi_{[\times]}(2^{\alpha}) \sum_{j=0}^{k} 2^{\alpha j}\left( m^{2}(5+2m)7^{\alpha k-j}-6(m2^{\alpha k-j})^{2}\right)\notag\\
&&+ 2^{\alpha}(2^{\alpha}-1)\sum_{j=0}^{k}  \left(  \frac{2m^2 (5 + 2 m)}{3}\: 7^{\alpha k-j} 
    -\left(  \frac{m^2\left(2m+15(\alpha k-j)+20\right)}{6} \right) 4^{\alpha k-j} \right)
\end{eqnarray}

It becomes clear now with the choice of $\beta=2^1$ the root of the polynomials $\varphi_{[\times]}(\beta),\varphi_{[\pm]}(\beta)$ optimally reduces the complexity of the inverse of the triangular matrix to 

\begin{eqnarray*}
    \Invt^\textbf{CRIT}(m 2^{k+1}) &=& \frac{4m^2 (5 + 2 m)}{3} \sum_{j=0}^{k}     7^{k-j}  + 2^{k+1} \frac{m^3-3m^2+8m-3}{3} \\
&&-  \sum_{j=0}^{k}  \frac{m^2\left(2m+15(k-j)+20\right)}{6} 4^{ k-j} \\    
&=& \frac{4m^2 (5 + 2 m)}{3} \frac{7^{k+1}-1}{6} - \frac{m^2}{9} (15\cdot 2^{2k+1}(k+1) + m(4^{k+1}-1))\\ &&+ 2^{k+1} \frac{m^3-3m^2+8m-3}{3} \\
&=& \frac{4m^2 (5 + 2 m)}{18}\:7^{k+1} -  \frac{2m^3 + (k+1)15m^2}{18}\: 4^{k+1} \\
&&+\frac{m^3-3m^2+8m-3}{3}\:2^{k+1}-\frac{m^3+2m^2 (5 + 2 m)}{9} 
\end{eqnarray*}

\begin{equation}\label{invT:m2k}
\begin{array}{lcc}
\Invt^\textbf{CRIT}(m 2^{k}) &=& \dfrac{4m^2 (5 + 2 m)}{18}\:7^{k} -  \dfrac{2m^3 + k 15m^2}{18}\: 4^{k} \\
               &&+\dfrac{m^3-3m^2+8m-3}{3}\:2^{k}-\dfrac{m^3+2m^2 (5 + 2 m)}{9}.
\end{array}
\end{equation}

Using the fact that $m=[n 2^{-k}]+1$ we have
\begin{eqnarray}
    \frac{4m^2 (5 + 2 m)}{18}\:7^{k} &\leq& \frac{4 (\frac{n}{2^{k}}+1)^2 (5 + 2 (\frac{n}{2^{k}}+1))}{18}\:7^{k} \notag\\
&\leq&\left( \frac{2^{-3k+2}n^3}{9}+\frac{11\cdot \:2^{1-2k}n^2}{9}+\frac{2^{-k+5}n}{9}+\frac{14}{9} \right) \:7^{k}\notag\\
&\leq& \frac{4}{9} \left(\frac{7}{8}\right)^{k} n^3 
    +\frac{22}{9} \left(\frac{7}{4}\right)^{k}n^2 
    +\frac{32}{9} \left(\frac{7}{2}\right)^{k}n +\frac{14}{9} 7^{k}\notag\\
&\leq& \left( \frac{4}{9}\left(\frac{8}{7}\right)^{\log_{2}(n)-k} 
        \!\!\! +\frac{22}{9} \left(\frac{4}{7}\right)^{\log_{2}(n)-k} 
        \!\!\! +\frac{32}{9} \left(\frac{2}{7}\right)^{\log_{2}(n)-k} 
        \vspace{-5in}+0.00065
           \right) n^{\log_{2}(7)}\notag\\
&\leq&  1.023 \: n^{\log_{2}(7)}   \label{eq--23}        
\end{eqnarray}
and 
\begin{eqnarray}
-\frac{2m^3 + k 15m^2}{18}\: 4^{k}
&\leq&  \frac{1}{9}\frac{n^3}{8^k}\: 4^{k} =-\frac{1}{9}\frac{n}{2^k} n^{2} = -\frac{32}{9} n^{2}
\end{eqnarray}
we have also, 
\begin{eqnarray}
    \frac{m^3-3m^2+8m-3}{3}\:2^{k} 
    &\leq&  \frac{1}{3}\left( ( \frac{n}{2^{k}}+1)^3-3(\frac{n}{2^{k}}+1)^2+8(\frac{n}{2^{k}}+1)-3\right)\:2^{k} \notag\\
    &=& \left(1+\frac{5n}{3\cdot \:2^k}+\frac{n^3}{3\cdot \:8^k}\right)\:2^{k} \notag\\
    &=& 2^{k}+\frac{5}{3} n +\frac{1}{3}\frac{n^3}{8^k}2^{k} \notag\\
    &=& \left(2^{k-\log_{2}(n)} + \frac{1}{3} 8^{\log_{2}(n)-k} \: 2^{k-\log_{2}(n)} \right) n^{2} + \frac{5}{3}\:n \notag\\
    &\leq& \left(\frac{1}{2^4} + \frac{8^5}{3}\frac{1}{2^4}\right) n^{2} + \frac{5}{3}\:n  \notag\\
    &\leq& 682.73\: n^{2} + \frac{5}{3}\:n
\end{eqnarray}
and, 
\begin{eqnarray}
    -\frac{m^3+2m^2 (5 + 2 m)}{9} 
    &=& - \frac{1}{9} \left( 5m^{3} +10 m^{2}  \right) \notag\\
    &\leq& -  \frac{5}{9}\frac{n^{3}}{8^k} -\frac{10}{9} \frac{n^{2}}{4^k} \notag\\ 
    &\leq& -  \frac{5}{9} 8^{\log_{2}(n)-k} -\frac{10}{9} 2^{\log_{2}(n)-k} \notag\\
    &\leq& -  \frac{5}{9} 8^{4} -\frac{10}{9} 2^{4}=-\frac{6880}{3} \label{eq--26}
\end{eqnarray}
Henceforth, summing up Eqs.\eqref{eq--23}--\eqref{eq--26} we obtain the asymptotic complexity of the CRIT algorithm, Eq.\eqref{invT:m2k} becomes
\begin{equation}\label{itm2k}
    \Invt^\textbf{CRIT}(m 2^{k}) \leq 1.023  n^{\log_{2}(7)} + 679.18 n^{2} -\frac{6880}{3}
\end{equation}

\section{Non-singular square matrix inverse}\label{sec:InverseSquareMatrix}
We present in this section, two approaches for calculating the inverse of a non-singular matrix. The first approach is based on classical factorization methods such as the $LU$ and the $QR$ decomposition. These classic decomposition methods are augmented to incorporate the calculation of the inverse decomposition online. We shall indeed, modify these classic algorithms to the extent where the direct decompositions and the inverse decomposition will share the time frame.  The second approach is a completely new method. Although it is simple to understand and implement. With the help of the new method developed in this article (based on the combinatorics calculations), we shall split a given non-singular matrix into two upper and lower matrices, and proceed with the inverse calculation iteratively, while each iteration calls the fast triangular inversion. 

\subsection{Inverse matrix by augmenting classic decomposition}\label{subsec:inversedecomposition}
We present in the sequel a modified version of both $QR$ and $LU$ (Crout) algorithms to compute the inverse factorization directly and not after already having the forward decomposition. In fact, we accordingly incorporate triangular inversion algorithms such as (\textbf{CRIT} or \textbf{COMBRIT}) in these well-known matrix decomposition algorithms to produce the inverse factorization.

\subsubsection{SQR: Inverse QR factorization}
A Modified Gram-Schmidt algorithm for the forward $QR$ factorization is augmented to incorporate the inverse evaluation of the upper triangular matrix $R$. 
As described in the bellow algorithm \textbf{SQR} the evaluation of the matrix $S$ is not dependent on the complete evaluation of the upper triangular matrix $R$. Instead, as each column of $R$ is computed, the corresponding column of $S$ is simultaneously computed.
\begin{algorithm}[H]
\begin{algorithmic}[1]
\State $R_{1,1} \gets \|\v_1\|_{2}$  
\If{$R_{1,1} = 0$} 
    \State \text{stop}
\Else 
    \State $D_{1,1} \gets \dfrac{1}{R_{1,1}}$
    ,\quad $q_{1} \gets D_{1,1} \v_{1}$, \quad $S_{1,1}\gets 1$
\EndIf 

\For{$j=2$ to $n$}
    \State $\hat{\q} \gets \v_{j}$
    \For{$i=1$ to $j-1$}
        \State $R_{i,j} \gets \hat{\q}^{t}\q_{i}$
        \State $\hat{q}\gets \hat{\q}-R_{i,j}\q_{i}$
    \EndFor
    \For{$i=1$ to $j-1$}
    \State $S_{i,j} \gets - \dfrac{1}{R_{j,j}} S_{i,i:j-1}^{t} R_{i:j-1,j}$
    \EndFor
    \State $R_{j,j}=\|\hat{\q}\|_{2}$
 \If{$R_{j,j} = 0$} 
    \State \text{stop}
\Else 
    \State $D_{j,j}\gets \dfrac{1}{R_{j,j}}$
    ,\quad $\q_{j} \gets D_{j,j}\hat{\q}$ , \quad $S_{j,j}\gets 1$
\EndIf    
\EndFor 
\State $S\gets D \cdot S $
\end{algorithmic}
\caption{SQR}\label{algo:sqr}
\end{algorithm}

To decompose a matrix into an orthogonal matrix and an upper triangular matrix, we can use the $QR$ decomposition. The classical $QR$ decomposition can be achieved using the modified Gram-Schmidt algorithm. However, this algorithm is known to have numerical stability issues. It is therefore advisable to use more stable algorithms, such as Householder $QR$ or Givens $QR$ for more dedicated applications.

The \textbf{SQR} algorithm introduces an iterative approach for computing the inverse matrix $S$ (hence the inverse decomposition of the initial matrix), avoiding the need to wait for the complete evaluation of the matrix $R$. By computing $S$ column by column in parallel with the evaluation of $R$, the algorithm provides immediate results at each step. Additionally, the SQR algorithm incorporates the triangular inversion method, allowing the inverse of $R$, denoted as $S$, to be computed as the decomposition progresses. As each new column of $R$ is computed, its corresponding column in the inverse matrix $S$ is simultaneously calculated. This integration of the triangular inversion method within the \textbf{SQR} algorithm enables a dynamic and progressive computation of the inverse, enhancing the algorithm's efficiency and making it suitable for handling large matrices and various computational applications.

It is important to note that the \textbf{SQR} algorithm \ref{algo:sqr} assumes that the matrix has full column rank. If a column is zero, the algorithm will stop, and the matrix cannot be decomposed.
 
Algorithm \textbf{SQR} \ref{algo:sqr} decomposes then 
\begin{eqnarray*}
    Q R &=& A \\
    S Q^{t} &=& A^{-1}
\end{eqnarray*}
\subsubsection{SKUL: Inverse LU factorization}
We introduce an augmented algorithm based on the Crout method. This algorithm incorporates the triangular inversion technique, allowing for the simultaneous computation of the inverse decomposition alongside the direct $LU$ decomposition. Similar to the previous approach, we utilize the \textbf{CRIT} or \textbf{COMBRIT} algorithm twice within this algorithm, as we construct two triangular matrices.

The augmented $LU$ algorithm, outlined below, combines the advantages of the $LU$ decomposition and the triangular inversion technique:

\begin{algorithm}[htbp]
\begin{algorithmic}[1]
\For{$i=1$ to $n$} 
    \State $L_{i, 1} \gets A_{i, 1}$
    , $U_{i,i} \gets 1$
    , $S_{i,i}  \gets 1$
    , $K_{i,i}  \gets 1$
\EndFor

\For{$j =2$ to $n$}
    \State $U_{1, j} \gets A_{1, j} / L_{1, 1}$ 
\EndFor
\State $S_{1,2}=-U_{1,2}$
, $D_{1,1}=1/L_{1,1}$
\For{$i =2$ to $n$}
    \For{$j=2$ to $i$}
        \State $L_{i, j} = A_{i, j} - L_{i, 1:j - 1} \cdot U_{1:j - 1, j}$
    \EndFor
    \State $D_{i,i}=1/L_{i,i}$;
 
        \For{$j=1$ to $i-1$}
            \State $K_{i,j} \gets - L_{i,1:i} \cdot K_{1:i,j}  D_{i,i}$
        \EndFor
 
   \For{$j=i+1$ to $n$}
        \State $U_{i,j} \gets \left(A_{i, j} - L_{i, 1:i - 1} \cdot U_{1:i - 1, j}\right) / L_{i,i}$
    \EndFor
    \If{$i<n$}
        \For{$l=1$ to $i$}
            \State $S_{l,i+1} \gets - S_{l,l:i} \cdot U_{l:i,i+1}$
        \EndFor
    \EndIf
\EndFor
\State $K\gets K\cdot D$
\end{algorithmic}
\caption{SKUL}\label{algo:skul}
\end{algorithm}

In the \textbf{SKUL} algorithm, the calculation of the inverse matrices S and K is done in a synchronized manner with the construction of the matrices $L$ and $U$ during the $LU$ decomposition. As each column of $L$ is computed, the corresponding row of $K$ is immediately calculated using $\textbf{CRIT}$. Likewise, as each column of $U$ is computed, the corresponding row of $S$ is simultaneously evaluated using \textbf{CRIT}$^{\star}$. This simultaneous computation of the inverse matrices ensures that the inverse decomposition is formed progressively and dynamically throughout the $LU$ decomposition process. By synchronizing the calculation of rows in $K$ with the computation of columns in $L$, and the calculation of rows in $S$ with the computation of columns in $U$, the \textbf{SKUL} algorithm achieves efficient and real-time computation of the inverse decomposition

Algorithm \textbf{SKUL} \ref{algo:skul} decomposes then 
\begin{eqnarray*}
    L U &=& A \\
    S K &=& A^{-1}
\end{eqnarray*}
\subsection{Recursive Split and Inverse Method for Non-singular Matrices}\label{subsec:RSI}
\subsubsection{Element-wise RSI method}
Given a non-singular matrix $A$, we can express it as the sum of two triangular matrices, $M$ (non-singular) and $N$ (singular), such that $A = M + N$. Let's assume that $M$ is non-singular, while $N$ is a singular matrix. By appropriately permuting the rows or columns of the original matrix $A$, we can ensure the non-singularity of $M$, satisfying $A = M - N$. Starting from the definitions of $M$ and $N$, we have:
\begin{equation}\label{LUSum0} 
A=M+N=M(I+M^{-1}N).
\end{equation}
If we let $A_0=A$, $M_0=M$, and $N_0=N$, we can re-iterate as such $A_1=I+M_0^{-1}N_0$, which we decompose into a sum of singular and non-singular matrices $N_1$ and $M_1$ respectively. We have then 
\begin{equation}\label{LUSum1} A_1=I+M_0^{-1}N_0=M_1+N_1=M_1(I+M_1^{-1}N_1)\end{equation} 
By combining Eq.\eqref{LUSum0} and Eq.\eqref{LUSum1} we have 
$$A=M_0 A_1=M_0M_1(I+M_1^{-1}N_1)$$
With a straightforward bootstrapping argument we infer 
\begin{equation}\label{LUNEW}
    A = \underbrace{M_0M_1\cdots M_n}_\text{Lower triangular} \underbrace{(I+M_{n}^{-1}N_{n})}_\text{Upper triangular}
\end{equation}
Which is a re-invention of the triangular decomposition ($UL$ or $LU$), by means of recursive matrix split and product of inverse triangular matrices, taking advantage of the closedness under product of triangular matrices. Besides, as $N$ is a singular matrix with a diagonal full of zeros, the matrix product $M_i^{-1}N_i$ results in a matrix with rank $n_i-1$ where $n_i$ stands for the rank of $N$. Therefore, and by construction, we have the following Theorems.

\begin{thm}
Let $A=M+N$, where $M$ is an $n\times n$ lower (or upper) triangular matrix extracted from $A$, where $N$ is the complement upper (or lower) triangular matrix respectively. Assume that $M$ (or $N$) is invertible matrix, then 
\begin{equation}
\left\lbrace \begin{array}{lll}
    A &=& \left( \displaystyle\prod_{i=0}^{n} N_{i} \right) (I+N_{n}^{-1}M_{n}) \vspace{.3cm}\\
    M_{i+1} + N_{i+1} &=& 
    \begin{cases}
    A \text{ for } i=0 \\
    I+N_{i}^{-1}M_{i} \text{ for } i\geq 1.    
    \end{cases}
\end{array}\right.
\end{equation}
Equivalently, by symmetry under the condition of $M$ being an invertible matrix 
\begin{equation}
\left\lbrace \begin{array}{lll}
    A &=& \left( \displaystyle\prod_{i=0}^{n} M_{i} \right) (I+M_{n}^{-1}N_{n}) \vspace{.3cm}\\
    M_{i+1} + N_{i+1} &=& 
    \begin{cases}
    A \text{ for } i=0 \\
    I+M_{i}^{-1}N_{i} \text{ for } i\geq 1.    
    \end{cases}
\end{array}\right.
\end{equation}
\end{thm}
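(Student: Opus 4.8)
The plan is to upgrade the bootstrapping computation of Eqs.~\eqref{LUSum0}--\eqref{LUNEW} to a bona fide induction, and then to pin down the two facts that the informal derivation takes for granted: that the recursion stays well defined, and that it stabilizes after at most $n$ steps so that the product in the statement is a genuine finite product. It suffices to prove the version in which the triangular part carrying the diagonal is invertible; the other displayed system is obtained word for word after interchanging ``lower'' with ``upper'' and the letters $M$ and $N$, so I would state and prove only one of the two.

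First I would fix the recursion cleanly: set $A_0=A$, and for $k\ge 0$ let $A_k=M_k+N_k$ be the splitting of $A_k$ into its triangular part $M_k$ (diagonal included) and its strictly triangular complement $N_k$, and define $A_{k+1}=I+M_k^{-1}N_k$ whenever $M_k$ is invertible. Then the one-line identity $A_k=M_k+N_k=M_k(I+M_k^{-1}N_k)=M_kA_{k+1}$ holds for each such $k$, exactly as in Eqs.~\eqref{LUSum0}--\eqref{LUSum1}; a one-step induction then yields $A=M_0M_1\cdots M_{k-1}A_k$ for every $k$ to which the recursion extends, which is the bootstrap behind Eq.~\eqref{LUNEW}. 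Each $M_j$ is triangular of the same orientation, and triangular matrices of a fixed orientation are closed under multiplication (Section~\ref{sec:motivation}), so the leading product $M_0\cdots M_{k-1}$ stays triangular of that orientation; the whole remaining burden is to control the trailing factor $A_k$.

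Termination is the heart of the matter, and I would prove it by induction on $n$ via a deflation step. Since $N_k$ is strictly upper triangular its first column vanishes, hence the first column of $A_{k+1}=I+M_k^{-1}N_k$ coincides with that of the identity; therefore $A_{k+1}$ is block upper triangular with a $1\times 1$ leading block, $M_{k+1}$ and $N_{k+1}$ inherit that block shape, and a short computation shows that the bottom-right $(n-1)\times(n-1)$ block of $A_{k+2}$ is obtained from that of $A_{k+1}$ by exactly one step of the same recursion applied in size $n-1$. By the induction hypothesis that block is brought to triangular form within $n-1$ further steps, so $A_k$ is triangular after at most $n$ steps; once $A_k$ is triangular, $M_k$ is merely its diagonal, $A_{k+1}=I+M_k^{-1}N_k$ is unit triangular, and the recursion has reached a fixed point, so $M_j=I$ for all larger $j$ and the product collapses to $\prod_{i=0}^{n}M_i$ times the triangular factor $I+M_n^{-1}N_n$, as claimed. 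This same column-by-column deflation also makes precise the remark following Eq.~\eqref{LUNEW} that each step reduces the rank of the strictly triangular remainder on the still-active sub-block, so that after $n$ steps nothing is left to eliminate.

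The step I expect to be the main obstacle is verifying that every $M_k$ in the chain is invertible, so that the recursion never breaks. Unwound through the deflation, this is exactly the nonvanishing of the successive pivots — the $(1,1)$ entries of the deflated blocks — equivalently the nonvanishing of all leading principal minors of $A$, i.e.\ precisely the classical condition for $A$ to admit a triangular factorization. I would therefore (i) record this nondegeneracy as the hypothesis, (ii) recall that a row or column permutation of $A$ — the one invoked just before the statement — always restores it, and (iii) check that it propagates down the chain: invertibility of $M_k$ makes $A_{k+1}$ well defined, its leading pivot is governed by a ratio of leading minors of $A$ and hence nonzero, and the deflated $(n-1)\times(n-1)$ block inherits the same nondegeneracy, closing the induction. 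The diagonal entries that must be controlled here are exactly the ones returned by the combinatorial formula of Theorem~\ref{Combinatorix} applied to the already-processed leading block, so in practice the check costs nothing.
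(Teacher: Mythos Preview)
Your approach is essentially the paper's own: the paper offers no formal proof beyond the bootstrapping computation Eqs.~\eqref{LUSum0}--\eqref{LUNEW} preceding the theorem, together with the column-deflation observation (the last column of $I+M_j^{-1}N_j$ becoming $e_n$, for the opposite orientation) recorded just after Algorithm~\ref{algo:rsi}. You have formalized both ingredients more carefully than the paper does, and your block computation showing that the trailing $(n-1)\times(n-1)$ block of $A_{k+2}$ evolves autonomously under the size-$(n-1)$ recursion is precisely the rigorous content of the paper's informal ``reduces the split further by rank-$1$'' remark.

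One inaccuracy in your invertibility discussion: the condition that every $M_k$ be invertible is \emph{not} equivalent to the nonvanishing of the leading principal minors of $A$. Already at step~$0$, invertibility of $M_0$ requires every diagonal entry $a_{ii}$ of $A$ to be nonzero, which is strictly stronger than $a_{11}\neq 0$; for instance $A=\left(\begin{smallmatrix}1&1\\1&0\end{smallmatrix}\right)$ has nonzero leading minors (hence admits an $LU$ factorization) yet $M_0$ is singular and the recursion cannot even start. The point is that this process is not Gaussian elimination: at each step you invert the \emph{entire} triangular part $M_k$, so you need the whole diagonal of $A_k$ to be nonvanishing, not just a single pivot. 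This is exactly why the paper applies a fresh permutation $P_j$ at every iteration of Algorithm~\ref{algo:rsi} rather than a single permutation up front. Your plan to record a nondegeneracy hypothesis and invoke permutations is the right fix; just do not identify that hypothesis with the classical leading-minors condition, and drop the appeal to Theorem~\ref{Combinatorix} for the pivots, which does not supply what is needed here.
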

By taking the inverse of Eq.\eqref{LUNEW} we have the following results 
\begin{thm}
Let $A=M+N$, where $M$ is an $n\times n$ lower (or upper) triangular matrix extracted from $A$, where $N$ is the complement upper (or lower) triangular matrix respectively. Assume that $M$ (or $N$) is invertible matrix, then 
\begin{equation}
\left\lbrace \begin{array}{lll}
    A^{-1} &=&  (I+N_{n}^{-1}M_{n})^{-1} \left( \displaystyle\prod_{i=0}^{n} N_{i}^{-1} \right) \vspace{.3cm}\\
    M_{i+1} + N_{i+1} &=& 
    \begin{cases}
    A \text{ for } i=0 \\
    I+N_{i}^{-1}M_{i} \text{ for } i\geq 1.    
    \end{cases}
\end{array}\right.
\end{equation}
Equivalently, by symmetry under the condition of $M$ being an invertible matrix 
\begin{equation}
\left\lbrace \begin{array}{lll}
    A^{-1} &=&  (I+M_{n}^{-1}N_{n})^{-1} \left( \displaystyle\prod_{i=0}^{n} M_{i}^{-1} \right) \vspace{.3cm}\\
    M_{i+1} + N_{i+1} &=& 
    \begin{cases}
    A \text{ for } i=0 \\
    I+M_{i}^{-1}N_{i} \text{ for } i\geq 1.    
    \end{cases}
\end{array}\right.
\end{equation}
\end{thm}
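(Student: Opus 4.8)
The plan is to derive both displayed identities purely algebraically, by inverting the bootstrapped factorization \eqref{LUNEW} together with its mirror image; the only real work is to make the inversion legitimate, that is, to certify that every factor being inverted is non-singular and to keep track of the order reversal.

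First I would record the symmetric companion of \eqref{LUNEW}. Equation \eqref{LUNEW} was established under the hypothesis that $M$ is invertible and reads $A = M_0 M_1 \cdots M_n\,(I+M_n^{-1}N_n)$, with recursion $M_{i+1}+N_{i+1}=I+M_i^{-1}N_i$. Rerunning the same telescoping argument with the roles of $M$ and $N$ interchanged — after a harmless permutation of rows or columns of $A$ so that the chosen triangular summand is non-singular — gives $A = N_0 N_1 \cdots N_n\,(I+N_n^{-1}M_n)$ with $M_{i+1}+N_{i+1}=I+N_i^{-1}M_i$, which is exactly the recursion appearing in the two cases of the statement. Next I would argue invertibility of all the factors: each $M_i$ (resp.\ $N_i$) is triangular, hence invertible precisely when its diagonal is zero-free; this holds for the initial block by assumption, and at each later stage because $A_{i+1}=I+M_i^{-1}N_i$ perturbs the identity by the term $M_i^{-1}N_i$ whose rank strictly decreases down the recursion ($N_i$ being singular with zero diagonal), with a further row/column permutation restoring non-singularity of the extracted triangular block whenever needed. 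Finally $(I+M_n^{-1}N_n)$ is invertible because $A$ is and the $M_i$ are — indeed $(I+M_n^{-1}N_n)=M_n^{-1}\cdots M_0^{-1}A$ from \eqref{LUNEW}.

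With all factors invertible, the conclusion follows from one application of the reverse-order rule $(XY)^{-1}=Y^{-1}X^{-1}$: \[ A^{-1}=(I+M_n^{-1}N_n)^{-1}\,M_n^{-1}M_{n-1}^{-1}\cdots M_0^{-1}=(I+M_n^{-1}N_n)^{-1}\prod_{i=0}^{n}M_i^{-1}, \] the product being read with the indices in the order dictated by inverting the product in \eqref{LUNEW}; the $N$-version comes out identically from the symmetric factorization. I expect the main obstacle to be bookkeeping rather than mathematics: one must be careful that the factor order is genuinely reversed upon inversion, that the finite termination depth $n$ denotes the same object in both symmetric statements, and that the permutations invoked to keep the extracted triangular blocks non-singular are applied consistently. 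Once these conventions are pinned down, the proof is essentially a single line.
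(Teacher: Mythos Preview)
Your proposal is correct and follows exactly the approach the paper indicates: the paper's own proof environment is left empty, and the preceding line simply states ``By taking the inverse of Eq.~\eqref{LUNEW} we have the following results,'' which is precisely what you carry out via the reverse-order rule. If anything, you supply more detail than the paper does, in particular the justification that every intermediate factor is invertible.
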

\begin{proof}
    
\end{proof}

\begin{algorithm}[H]
\begin{algorithmic}[1]
\State $M + N \gets AP_{0}$\;
\State $U \gets I$\;
\State $V \gets I$\;
\For{$i=1$ to $n$}
\State $U \gets U\cdot M$\;
\State $V \gets V\cdot M^{-1}$\;
\State $M+N\gets \left(I+M^{-1}N\right)P_{i}$
\EndFor
\State $A^{-1}=L\cdot U$;
\State $A=L\cdot U$;
\end{algorithmic}
\caption{Element-wise Recursive Inversion (RSI) algorithm}\label{algo:rsi}
\end{algorithm}

We consider a given non-singular matrix $A$ of size $n\times n$, to which we apply the permutation (swap of columns) $P$, i.e. $AP$ such that the diagonal of the resulting permuted matrix $AP$ are all non-zeros. This way, the split $M+N=AP$ produces a non-singular triangular matrix $M$. This permutation will be applied every iteration to make sure that all splits lead to non-singular triangular matrix $M_{j}$, where $M_{j}+N_{j}=(I+M_{j-1}^{-1}N_{j-1})P_{j}$. The former assertion is trivial as per the assumption that $A$ is non-singular which writes as a product of two matrices $M$ and $(I+M_{j-1}^{-1}N_{j-1})$, where $M$ is non-singular by construction, hence $(I+M_{j-1}^{-1}N_{j-1})$ is also non-singular. 

Furthermore, it is worth mentioning that the singular triangular matrix $N_{j}$ has zeros on the diagonal, in addition to the closedness of the triangular matrices for the inversion operation, $M_{j}^{-1}$ is also a triangular matrix. Therefore in the case of $M$ upper triangular matrix and $N$ a lower triangular, the resulting product $M_{j}^{-1}N_{j}$ has the last column full of zeros. The shift with the identity matrix in each iteration of the Algorithm \ref{algo:rsi} makes the resulting matrix 
$I+M_{j}^{-1}N_{j}$ has the last column equal to $e_n$ (c.f. $n^\text{th}$ of the canonical basis). This process reduces the split further by rank-$1$ at each iteration, where the size of the matrix $M_{j}$ to be inverted becomes only $(n-j)\times(n-j)$:

\begin{equation}
M_{j}=
\begin{pmatrix}
    a^{(j)}_{1,1} & a^{(j)}_{1,2} & \cdots & a^{(j)}_{1,n-j} & 0 & \cdots & 0 \\
     & a^{(j)}_{2,2} & \cdots & a^{(j)}_{2,n-j} & 0 & \cdots &0 \\
     &  & \ddots & \vdots & \vdots & \ddots & \vdots \\
     &  & & a^{(j)}_{n-j,n-j} & 0 & & 0 \\
     & \bigzero  &  &  & 1 & \cdots  & 0 \\
     &  &  &  &  & \ddots & \vdots \\
     &  &  &  &  &  & 1 \\
  \end{pmatrix}
\end{equation}

Although the algorithm constructs a mathematically correct inverse of a given non-singular square matrix, it suffers from an overwhelming computational complexity overhead. In fact, its total time complexity sums up to 

\begin{eqnarray}\label{complexitySRI}
    \Invf^\textbf{SRI}(m 2^{k+1})&=& \sum_{j=0}^{m 2^{k+1}-1} \left(\Invt^\textbf{CRIT}(m 2^{k+1}-j) + \Pul(m 2^{k+1}-j) +  m 2^{k+1}-j  \right)
\end{eqnarray}

For reader convenience, we include the detailed calculation of the formula in the appendix.

It is clear that such complexity is worse than $\mathcal{O}(n^3)<\Invf(m 2^{k+1})$. Although, it is easy on the other hand to verify the correctness of the algorithm on inverting non-singular matrices. 

\subsubsection{Block-wise RSI method: BRSI}
Despite the fact that the element-wise approach has a complexity that exceeds cubic order, it forms the foundation for the block-wise approach, which significantly reduces the time complexity to a subcubic order by leveraging Strassen's method for matrix-matrix multiplication.

In the sequel, we present the \textbf{BRSI} by promoting a $\gamma$-block splitting approach. We shall assume the order of the matrix $n=m 2^{k+1}$ where $m=\gamma q$, with $2\leq \gamma \leq 2^{4}$ being an integer. 

\begin{equation}\label{qg-bounds}
\begin{cases}
    &16\cdot 2^{k} \leq n \leq m2^{k} = \gamma q 2^{k} \\
    &17 \leq q\gamma \leq 31\\
    &q=\dfrac{\left [ n 2^{-k}\right]+1}{\gamma}
    \end{cases}
\end{equation}

One remark that with regards to the setting in Eqs.\eqref{qg-bounds} ( see \cite{strassen1969gaussian}), we have $16 < m < 32$.  As we shall consider the factorization $m=q\gamma$, $m$ prime numbers can be avoided by reconsidering $n=m2^{k}=(2m)2^{k-1}$. 

Established on Eq.\eqref{qg-bounds}, we provide the following preliminary formulas that we use in the complexity analysis.
\begin{align}
q^{3}\leq \left(\frac{n 2^{-k}+1}{\gamma}\right)^{3} &= \quad \frac{1}{\gamma^{3}}\left(\frac{n^3}{8^{k}}+3\cdot \:\frac{n^2}{4^{k}}+3\cdot \frac{n}{2^{k}}+1 \right)\\
q^{2}\leq \left(\frac{n 2^{-k}+1}{\gamma}\right)^{2} &= \frac{1}{\gamma^2}\left(\frac{n^2}{4^k}+\frac{2n}{2^k}+1 \right) \\
\mathcal{Q}_{3}(\gamma):=\sum_{j=0}^{\gamma-1} (\gamma-j)^{3}&=\frac{\gamma^2\left(\gamma+1\right)^2}{4}  \\
\mathcal{Q}_{2}(\gamma):=\sum_{j=0}^{\gamma-1} (\gamma-j)^{2}&= \frac{\gamma\left(2\gamma+1\right)\left(\gamma+1\right)}{6} \\
\mathcal{Q}_{1}(\gamma) :=\sum_{j=0}^{\gamma-1} (\gamma-j)^{2}&=\frac{\gamma(\gamma+1)}{2} 
\end{align}


The Block-wise Recursive Inversion (\textbf{BRSI}) algorithm, which is an extension of Algorithm \ref{algo:rsi} with $\gamma$-blocks, is introduced in a similar manner. In the following sections, we outline the key steps involved in analyzing the time complexity of the \textbf{BRSI} algorithm.
\paragraph{Steps of \textbf{BRSI}($\gamma$)}\label{algo:brsi}
Given a non-singular square matrix $A$ and a permutation $P_0$ the \textbf{BRSI} proceeds as follows
\begin{enumerate}[label=\textbf{Step}-\arabic*]
    \item: Set $\ell=0$, and ${\bf A}_{\ell}=AP_{0}$
    \item: Split the matrix ${\bf A}_{\ell}$ of order $n=m2^{k}$ into $\gamma$ blocks of order $q2^{k}$ each.
    \item: Form $\bf{L}_{\ell}$ Low blocks triangular from $A_{\ell}$, and $\bf{U}_{0}$ upper blocks triangular from $A$, such that $$\bf{L}_{\ell}+\bf{U}_{\ell}=A_{\ell} P_{\ell}$$
    \item: \label{stepdoublerecusion}
    \begin{enumerate}[label=\textbf{Step-4.}-\arabic*]
        \item \textbf{If $\bf U$ is triangular} use \textbf{COMBRITE}($\beta$) Algorithm \ref{algo:COMBRITE} to evaluate ${\bf{U}}^{-1}_{\ell}$
        \item \textbf{Else} use  \textbf{BRSI}($\beta$) Algorithm to evaluate ${\bf{U}}^{-1}_{\ell}$
    \end{enumerate}
    \item: Set $${\bf A}_{\ell+1}={\bf I}_d + {\bf{U}}^{-1}_{0} {\bf{L}}^{}_{0}$$
    \item: Repeat \textbf{Steps}$1--5$  times $(\gamma-1)$, with $\ell=\ell+1$.
\end{enumerate}
Hence, the BSRI terminates after performing $\gamma$ calls of i) $\P$ as necessary permutation operation to handle the appropriate pivoting while extracting a non-singular matrix $\mathbf{U}_{\ell}$ before the split, ii) $\Invt^\textbf{COMBRIT}$ for block/triangular matrix inversion, and iii) $\Pul$  block matrix multiplication of Upper-block triangular and lower-block triangular matrices. Finally, after the loop, we obtain 
\begin{eqnarray*}
    A P_{0}P_{1}\cdots P_{\gamma-1}  &=& \mathbf{U}_{0}\mathbf{U}_{1}\cdots\mathbf{U}_{\gamma-1}\mathbf{L}_{\gamma-1} \\
    A P &=& \mathbf{U}\mathbf{L}
\end{eqnarray*}
where $P = P_{0}P_{1}\cdots P_{\gamma-1}$ hence $P^{-1}=P_{\gamma-1}\cdots P_{1}P_{0}$, and $\mathbf{U}=\mathbf{U}_{0}\mathbf{U}_{1}\cdots\mathbf{U}_{\gamma-1}$ and $\mathbf{L}=\mathbf{L}_{\gamma-1}$.

An important aspect to highlight is the presence of double recursion within the \textbf{BRSI} algorithm, as described in \ref{stepdoublerecusion}. This feature provides flexibility in selecting the parameters $\gamma$ and $\beta$, which respectively determine the splitting of the square matrix into $\gamma$
 blocks for inversion and recursion for handling the inverse of triangular matrices in the combinatorial-based algorithm. The simultaneous presence of these two levels of recursion in the \textbf{BRSI} algorithm offers a powerful tool for optimizing the inversion process. By carefully adjusting the values of $\gamma$ and $\beta$, researchers can fine-tune the algorithm to suit specific problem characteristics and computational requirements. This flexibility allows for tailoring the \textbf{BRSI} algorithm to achieve optimal performance and efficiency in various scenarios. 

The inversion's complexity of a full non-singular square matrix of order $n=m2^{k}$ satisfies 
\begin{equation}\label{invf0}
\begin{array}{cc}
    \Invf^\textbf{BSRI}(m 2^{k}) -\Invt^\textbf{algo}(m 2^{k})-\Pul(m2^{k}) -(\gamma-1)\Ptt(m2^{k}) \\ = \\ \sum_{j=0}^{\gamma-1} \left(\Invt^\textbf{algo}((\gamma-j) q 2^{k}) + \Pul( (\gamma-j) q 2^{k}) +  \gamma q 2^{k}  +\P((\gamma-j) q 2^{k})\right)
\end{array}    
\end{equation}
In our analysis, we will consider the worst-case complexity scenario, where the choice of algorithm, denoted as \textbf{algo}, can be either \textbf{CRIT} or \textbf{COMBRIT}. To provide a fair comparison, we will focus on the worst-case complexity that does not benefit from recursion. Specifically, when considering \textbf{CRIT}, we obtain the following expression:
\begin{align}
\sum_{j=0}^{\gamma-1} \Invt^\textbf{CRIT}((\gamma-j) q 2^{k} &\leq&  1.16\cdot  n^{\log_{2}(7)}  - 2.78\cdot n^{2}+ 461\cdot n  -  3472. \label{ubnd1}\\
\sum_{j=0}^{\gamma-1}\Pul\left((\gamma-j) q 2^{k}\right) &\leq& 1.0432 n^{\log_{2}(7)}+26.56 n^{2} + 309.67 n\log_{2}(n)  -  693.25 n \label{ubnd2}\\
\sum_{j=0}^{\gamma-1} \P((\gamma-j) q 2^{k}) &\leq&  2 n^{2} -3.32 \cdot n.\label{ubnd3}
\end{align}

The complete derivation of the upper bounds Eqs.\eqref{ubnd1}-\eqref{ubnd3} are reported in the appendix section. It has been found that the best $\gamma$-block split strategy is actually $2$-blocks split. In fact, the complexity is an increasing function of $\gamma$, which imposes taking $\gamma$ as its minimum which is $2$.  These upper bounds are, therefore, with $\gamma=2$. Furthermore, with Eq.\eqref{itm2k} and Eq.\eqref{pulm2k} the upper bound for the total complexity sums up to 
\begin{equation} 
   \Invf(m 2^{k}) \leq   4.1472n^{\log _2\left(7\right)} + 728.76n^2 - 98.807n+564.905n\log _2\left(n\right)-5765.33
\end{equation}

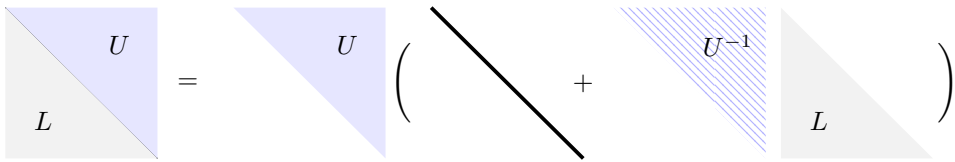
\begin{figure}[H]
\centering
 \begin{tikzpicture}[scale=2]
   \draw (0,1) -- (1,0);
  \fill[gray!10] (0,0) -- (1,0) -- (0,1) -- cycle; 
  \fill[blue!10] (1,0) -- (1,1) -- (0,1) -- cycle; 
  \fill[blue!10] (2.5,0) -- (2.5,1) -- (1.5,1) -- cycle; 
  \draw[line width=0.5mm] (2.8,1) -- (3.8,0); 
  \fill[pattern=north west lines,pattern color=blue!30] (5,0) -- (5,1) -- (4,1) -- cycle; 
  \fill[gray!10] (5.1,0) -- (6.1,0) -- (5.1,1) -- cycle; 
  \begin{scope}
  \draw (.75,.75) node {$\sc U$};
  \draw (2.25,.75) node {$\sc U$};
  \draw (4.75,.75) node {$\sc U^{-1}$};
  \draw (.25,.25) node {$\sc L$};
  \draw (5.35,.25) node {$\sc L$};
  \draw (1.2,.5) node {$=$};
  \draw (2.6,.5) node {$\Bigg($};
  \draw (3.8,.5) node {$+$};
  \draw (6.2,.5) node {$\Bigg)$};
  \end{scope}
\end{tikzpicture}
\caption{Sketch of the \textbf{RSI} \ref{algo:brsi} split decomposition method.} \label{fig:decomposition}
\end{figure}
\begin{figure}[H]
\centering
\begin{tikzpicture}[scale=0.4]
\newcommand{\size}{5}
\begin{scope} 
\foreach \i in {1,...,\size} {
    \foreach \j in {1,...,\size} {\ifnum\j>\i\draw[fill=gray!10] (\i,-\j) rectangle ++(1,-1);\fi}}
 Upper 
\foreach \i in {0,...,\size} {
\foreach \j in {0,...,\size} {\ifnum\j<\i \draw[fill=blue!10] (\i,-\j-1) rectangle ++(1,-1); \fi}}    
\end{scope}
\draw (7,-2.5) node {$=$};
\draw (20,-2.5) node {$+$};
\begin{scope} 
\foreach \i in {0,...,\size} {
    \foreach \j in {0,...,\size} {
        \ifnum\j<\i \draw[fill=blue!10] (\i+\size+2,-\j-1) rectangle ++(1,-1); \fi}}        
\draw (14,-2.5) node {$\Bigg($};  
\draw[line width=0.5mm] (15,-1) -- (15+\size,-\size-1); 
\end{scope}
\begin{scope}
\foreach \i in {0,...,\size} {
\foreach \j in {0,...,\size} {\ifnum\j<\i \draw[fill=blue!10] (\i+\size+\size+12,-\j-1) rectangle ++(1,-1); \fi}}        
\foreach \i in {1,...,\size} {
\foreach \j in {1,...,\size} {\ifnum\j>\i \draw[fill=gray!10] (\i+\size+\size+\size+12.5,-\j) rectangle ++(1,-1); \fi}}        
\draw (35,-2.5) node {$\Bigg)$};
\end{scope}
\end{tikzpicture}
\caption{Sketch of the BRSI block triangular matrix decomposition method with $\gamma$ blocks.} 
\label{fig:BRSI}
\end{figure}
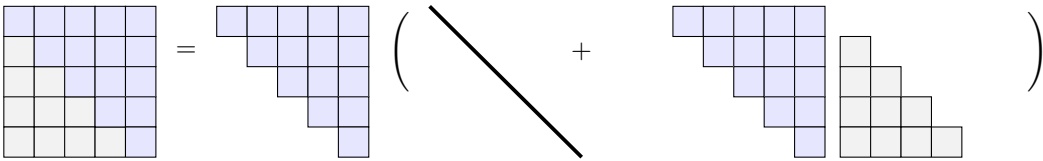

Hereafter, we describe how the algorithm \textbf{BRSI} performs for $\gamma=2$. We suppose A is a positive definite matrix,
\begin{equation*}A=
\begin{pmatrix}A_{11} & A_{12}\\ A_{21}&A_{22}\end{pmatrix}, 
\end{equation*}
with non-singular principal sub-blocks $A_{11}$ and $A_{22}$. It is worth noting here, that a permutation matrix should be applied in the case where a direct splitting doesn't lead to non-singular diagonal sub-blocks. Following the split performed in Figure~\ref{fig:BRSI} we have 

\begin{eqnarray*}A&=&
\begin{pmatrix}A_{11} & A_{12}\\0 &A_{22}\end{pmatrix} \left(
\begin{pmatrix}I_{11} & 0\\ 0 &I_{22}\end{pmatrix} + 
\begin{pmatrix}S_{11} & S_{12}\\ 0 &S_{22}\end{pmatrix}
\begin{pmatrix} 0 & 0 \\ A_{21}& 0\end{pmatrix}
\right) \\
&=& \begin{pmatrix}A_{11} & A_{12}\\0 &A_{22}\end{pmatrix}
\begin{pmatrix}I_{11} + S_{12}A_{21}& 0\\S_{22}A_{21} &I_{22}\end{pmatrix}
\end{eqnarray*}
where 
$$\begin{pmatrix}S_{11} & S_{12}\\ 0 &S_{22}\end{pmatrix}=\begin{pmatrix}A_{11} & A_{12}\\0 &A_{22}\end{pmatrix}^{-1}=\begin{pmatrix}
A_{11}^{-1} & -A_{11}^{-1}A_{12}A_{22}^{-1}\\
0 &A_{22}^{-1}
\end{pmatrix}.$$
Then computing the inverse $A^{-1}$ simply reads
\begin{eqnarray*}
A^{-1}=\begin{pmatrix}A_{11} & A_{12}\\ A_{21}&A_{22}\end{pmatrix}^{-1} 
&=&  
\begin{pmatrix}I_{11} + S_{12}A_{21}& 0\\S_{22}A_{21} &I_{22}\end{pmatrix}^{-1}
\begin{pmatrix}S_{11} & S_{12}\\ 0 &S_{22}\end{pmatrix}
\end{eqnarray*}
By further exploring the evaluation of the matrix inverse, we arrive at a formulation that is equivalent to the Schur complement. In fact,
\begin{eqnarray*}
 A^{-1}=
&=&
\begin{pmatrix}\left(I_{11} + S_{12}A_{21}\right)^{-1}& 0\\-S_{22}A_{21}\left(I_{11} + S_{12}A_{21}\right)^{-1} &I_{22}\end{pmatrix}
\begin{pmatrix}S_{11} & S_{12}\\ 0 &S_{22}\end{pmatrix}\\
&=&
\begin{pmatrix}\left(I_{11} -A_{11}^{-1}A_{12}A_{22}^{-1}A_{21}\right)^{-1}& 0\\-A_{22}^{-1}A_{21}\left(I_{11} -A_{11}^{-1}A_{12}A_{22}^{-1}A_{21}\right)^{-1} &I_{22}\end{pmatrix}
\begin{pmatrix}
A_{11}^{-1} & -A_{11}^{-1}A_{12}A_{22}^{-1}\\
0 &A_{22}^{-1}.
\end{pmatrix},
 \end{eqnarray*}
 By using the inversion lemma (also known as Woodbury identity), we can identify the result to the standard $2$-blocks $(LD)U$ identity. 
 Therefore, 
\begin{equation*}
A^{-1}=
\begin{pmatrix}\left(I_{11} -A_{11}^{-1}A_{12}A_{22}^{-1}A_{21}\right)^{-1}A_{11}^{-1}& -\left(I_{11} -A_{11}^{-1}A_{12}A_{22}^{-1}A_{21}\right)^{-1}A_{11}^{-1}A_{12}A_{22}^{-1}\\
-A_{22}^{-1}A_{21}\left(I_{11} -A_{11}^{-1}A_{12}A_{22}^{-1}A_{21}\right)^{-1}A_{11}^{-1} &
A_{22}^{-1}A_{21}\left(I_{11} -A_{11}^{-1}A_{12}A_{22}^{-1}A_{21}\right)^{-1}A_{11}^{-1}A_{12}A_{22}^{-1}+A_{22}^{-1},\end{pmatrix}
\end{equation*}
further, 
\begin{equation*}
A^{-1}=
\begin{pmatrix}\left(A_{11} -A_{12}A_{22}^{-1}A_{21}\right)^{-1}& -\left(A_{11} -A_{12}A_{22}^{-1}A_{21}\right)^{-1}A_{12}A_{22}^{-1}\\
-A_{22}^{-1}A_{21}\left(A_{11} -A_{12}A_{22}^{-1}A_{21}\right)^{-1} &
A_{22}^{-1}A_{21}\left(A_{11} -A_{12}A_{22}^{-1}A_{21}\right)^{-1}A_{12}A_{22}^{-1}+A_{22}^{-1}\end{pmatrix}
\end{equation*}

It is also worth mentioning that, if one chooses to invert $U$ instead of $L$ in the split steps, the result ends up with an $UL$-like decomposition for the initial matrix. Moreover, the splitting approach represents a generalization of the $\gamma$-block-$LU$ (or $\gamma$-block-$UL$) decompositions.  
\section{Numerical Tests}\label{sec:numerical}
The numerical simulations were conducted on an Intel Dell i7 core machine running the Ubuntu operating system (kernel version 5.15.0 SMP). The specific configuration of the machine is {\sc{x}}$86\_64$. The simulations were implemented using MATLAB version R2021b.

In this section, numerical tests of the proposed methods, namely, \textbf{SQR}, \textbf{SKUL}, and \textbf{BRSI} are presented. 

\subsection{SQR and SKUL approaches}
We present the results obtained from executing two methods: \textbf{SQR} and \textbf{SKUL}. These methods incorporate the \textbf{CRIT} method while performing the classic $QR$ and classic $LU$ (Crout) methods, respectively. We aim to provide simultaneous direct and inverse matrix decompositions.

The \textbf{SQR} method combines the classic QR method with CRIT. It utilizes the \textbf{CRIT} algorithm during the QR decomposition process to efficiently handle triangular matrices. Similarly, the \textbf{SKUL} method integrates \textbf{CRIT} within the classic $LU$ (Crout) method.

This integration allows for improved performance and accuracy in matrix decomposition tasks.
Its advantage reveals itself in the case of truncated decompositions where both direct and inverse decomposition will be provided, which forms assets for preconditioning techniques.

Let us recall that the \textbf{SQR} algorithm~\ref{algo:sqr} construct three matrices $S,Q,R$ such that 
$A=QR$ and $A^{-1}=R^{-1}Q^t=SQ^t$. Where $S$ is calculated using the newly proposed method  described in algorithm \ref{algo:sqr}.
For a given (non-singular) random matrix such as 
\begin{equation*}
 A=\left(\begin{array}{ccccc} 3.4932 & 1.6986 & 1.0719 & 1.5279 & 2.0805\\ 1.6986 & 2.9712 & 1.1746 & 1.0648 & 2.3337\\ 1.0719 & 1.1746 & 1.8540 & 0.6077 & 1.4418\\ 1.5279 & 1.0648 & 0.6077 & 2.1238 & 1.2346\\ 2.0805 & 2.3337 & 1.4418 & 1.2346 & 3.8037 \end{array}\right)   
\end{equation*}
the corresponding $Q, R$ decomposition is given by 
\begin{equation*}
    \underbrace{\left(\begin{array}{ccccc} 0.7300 & -0.5538 & -0.1454 & -0.3245 & -0.1844\\ 0.3550 & 0.7573 & -0.3205 & -0.1045 & -0.4323\\ 0.2240 & 0.1427 & 0.9303 & -0.0130 & -0.2526\\ 0.3193 & -0.0882 & -0.0688 & 0.9386 & -0.0683\\ 0.4348 & 0.3028 & 0.0771 & -0.0524 & 0.8430 \end{array}\right)}_{Q},
    \underbrace{\left(\begin{array}{ccccc} 4.7854 & 3.9123 & 2.4356 & 2.8443 & 4.7179\\ 0 & 2.0897 & 0.9435 & 0.2334 & 1.8639\\ 0 & 0 & 1.2617 & -0.0491 & 0.4990\\ 0 & 0 & 0 & 1.3136 & 0.0215\\ 0 & 0 & 0 & 0 & 1.3652 \end{array}\right)}_{R}
\end{equation*}
while the $S, Q^T$ is calculate as
\begin{equation*}
   \underbrace{\left(\begin{array}{ccccc} 0.2090 & -0.3912 & -0.1108 & -0.3871 & -0.1414\\ 0 & 0.4785 & -0.3578 & -0.0984 & -0.5210\\ 0 & 0 & 0.7926 & 0.0296 & -0.2901\\ 0 & 0 & 0 & 0.7613 & -0.0120\\ 0 & 0 & 0 & 0 & 0.7325 \end{array}\right)}_{S}, 
    \underbrace{\left(\begin{array}{ccccc} 0.7300 & 0.3550 & 0.2240 & 0.3193 & 0.4348\\ -0.5538 & 0.7573 & 0.1427 & -0.0882 & 0.3028\\ -0.1454 & -0.3205 & 0.9303 & -0.0688 & 0.0771\\ -0.3245 & -0.1045 & -0.0130 & 0.9386 & -0.0524\\ -0.1844 & -0.4323 & -0.2526 & -0.0683 & 0.8430 \end{array}\right)}_{Q^t}
\end{equation*}
On the other hand, and for the same matrix $A$ as above, the results for the $S, K, U,$ and $L$ decomposition is as follows
\begin{equation*}
    \underbrace{\left(\begin{array}{ccccc} 1 & 0.4863 & 0.3069 & 0.4374 & 0.5956\\ 0 & 1 & 0.3046 & 0.1500 & 0.6163\\ 0 & 0 & 1 & 0.0308 & 0.3022\\ 0 & 0 & 0 & 1 & 0.0810\\ 0 & 0 & 0 & 0 & 1 \end{array}\right)}_{U},
    \underbrace{\left(\begin{array}{ccccc} 3.4932 & 0 & 0 & 0 & 0\\ 1.6986 & 2.1452 & 0 & 0 & 0\\ 1.0719 & 0.6534 & 1.3261 & 0 & 0\\ 1.5279 & 0.3218 & 0.0408 & 1.4059 & 0\\ 2.0805 & 1.3221 & 0.4007 & 0.1139 & 1.6195 \end{array}\right)}_{L}
\end{equation*}
\begin{equation*}
    \underbrace{\left(\begin{array}{ccccc} 1 & -0.4863 & -0.1588 & -0.3596 & -0.2188\\ 0 & 1 & -0.3046 & -0.1407 & -0.5129\\ 0 & 0 & 1 & -0.0308 & -0.2997\\ 0 & 0 & 0 & 1 & -0.0810\\ 0 & 0 & 0 & 0 & 1 \end{array}\right)}_{S},
    \underbrace{\left(\begin{array}{ccccc} 0.2863 & 0 & 0 & 0 & 0\\ -0.2267 & 0.4662 & 0 & 0 & 0\\ -0.1197 & -0.2297 & 0.7541 & 0 & 0\\ -0.2557 & -0.1000 & -0.0219 & 0.7113 & 0\\ -0.1351 & -0.3167 & -0.1851 & -0.0500 & 0.6175 \end{array}\right)}_{K}
\end{equation*}

\begin{table}
 \begin{tabular}{ccccccc}\toprule
Matrix Size & LU & SKUL & Overhead Ratio & QR & SQR & Overhead Ratio \\\midrule
16$\times$16 & 4.22e-03 & 9.03e-03 & 2.2 & 1.09e-03 & 1.97e-03 & 1.80 \\
32$\times$32 & 2.09e-03 & 3.41e-03 & 1.63 & 1.80e-04 & 1.27e-03 & 7.08 \\
64$\times$64 & 6.13e-03 & 1.31e-02 & 2.14 & 8.86e-04 & 2.09e-03 & 2.36 \\
128$\times$128 & 2.49e-02 & 5.42e-02 & 2.17 & 4.05e-03 & 8.24e-03 & 2.04 \\
256$\times$256 & 1.18e-01 & 2.39e-01 & 2.02 & 2.47e-02 & 5.78e-02 & 2.34 \\
512$\times$512 & 5.29e-01 & 1.19 & 2.25 & 1.81e-01 & 3.11e-01 & 1.72 \\
1024$\times$1024 & 3.53 & 7.63 & 2.16 & 1.29 & 2.30 & 1.78 \\
\bottomrule
\end{tabular}
\caption{Statistic calculation of the time complexity overheads of the augmented classical $QR$ and $LU$ method.}\label{taboverheads}
\end{table}
Table \ref{taboverheads} presents the overhead results obtained from comparing the $LU$ and \textbf{SKUL} algorithms, as well as the $QR$ and \textbf{SQR} algorithms, for various matrix sizes. The overhead represents the additional time required by the modified algorithms compared to their conventional counterparts. From the results, it is evident that both the \textbf{SKUL} and \textbf{SQR} algorithms exhibit higher overhead ratios compared to the $LU$ and $QR$ algorithms, respectively. This is because incorporating the triangular inversion technique introduces additional computations, leading to longer execution times.

For the $LU$ and \textbf{SKUL} pair, the overhead ratios range from approximately 2.06 to 2.43, indicating that the \textbf{SKUL} algorithm has an overhead of around 2 times compared to the LU algorithm. Similarly, for the $QR$ and \textbf{SQR} pair, the overhead ratios range from approximately 1.46 to 2.76, indicating that the \textbf{SQR} algorithm has an overhead of around 1.5 to 2.8 times compared to the $QR$ algorithm. This indicates that the modified algorithms take roughly almost two times longer than their conventional counterparts while producing both direct and inverse decomposition. These findings highlight the trade-off between the benefits of triangular inversion incorporated in classical methods and the associated increase in computational complexity. Note that if one does first $LU$ then would apply \textbf{CRIT} to inverse $U$ and $L$ to form $S$ and $K$ respectively, would take longer wall-time computation. A report of the run-time performance of the \textbf{CRIT} method is given in the next section, when one can clearly see that the cost of making $S,K,U,L$ would take the usual runtime of obtaining the direct decomposition, with additional twice times the runtime for the evaluation of \textbf{CRIT}. This shows how important is the incorporation of the \textbf{CRIT} within the classical codes. Furthermore, if this implementation uses low-level programming such as BLAS, it would greatly decrease the ratio. 

It is worth mentioning that these overheads are mainly due to the fact that the \textbf{CRIT} doesn't benefit from any recurrent relation that decreases the handled matrix size within the iterations. On the other hand, we believe that the incorporation of \textbf{COMBRIT} method in the classical decomposition algorithms would further accelerate their computation wall-time and reduce the computational overhead. 

\subsection{Runtime performance of the combinatorial Based square matrix inversion}

Table \ref{tab:combrit} showcases a comparison of the CPU time (in seconds) for two different methods, namely Colomn Recursive Inversion Triangular \textbf{CRIT} and Combinatorial-based Block Recursive Inversion Triangular \textbf{COMBRIT}. These methods are specifically designed for non-singular triangular matrices.

The table includes various matrix sizes, ranging from $16\times16$ to $1024\times1024$, and displays the average CPU time obtained from $10$ runs of each algorithm.

Upon examining the results, we observe that the \textbf{COMBRIT} method consistently exhibits lower CPU times, especially when $\beta=2$ as it has been dictated by the theoretical complexity analysis. Furthermore, the \textbf{COMBRIT} outperforms the reference \textbf{CRIT} method, and tis is expected since \textbf{CRIT} doesn't benefit from the reduction of calculation through recursive iterations. 

However, as the matrix size grows, the \textbf{COMBRIT} method experiences a noticeable increase in CPU time, when higher values of the parameter $\beta$ are considered. This is due to the complexity overhead generated by the combinatorial approach. Nonetheless, it is recalled that these implementations are sequential while the \textbf{COMBRIT} approach doesn't take advantage of its natural parallel computing. 

However, the \textbf{COMBRIT} method maintains its efficiency and outperforms \textbf{CRIT} for larger matrices when $\beta$ is chosen as $\beta=2,4$.

\begin{table}[H]
  \centering
  \begin{tabular}{cccccc}
     - & \multicolumn{5}{c}{CPU time in second (s)}\\
    \toprule
    \multirow{2}{*}{Matrix Size} & \multirow{2}{*}{CRIT} & \multicolumn{4}{c}{COMBRIT} \\
    \cmidrule(lr){3-6} &  & $\beta=2^1$ & $\beta=2^2$ & $\beta=2^3$ & $\beta=2^4$ \\
    \midrule
 $16\times16$ & 1.32e-03 & 1.97e-03 & 8.60e-04 & 8.58e-04 & 8.32e-04\\
 $32\times32$ & 2.04e-03 & 4.76e-02 & 1.62e-02 & 1.78e-02 & 4.05\\
 $64\times64$ & 6.02e-03 & 1.02e-03 & 2.24e-03 & 1.27e-02 & 4.13\\
 $128\times128$ & 2.55e-02 & 1.52e-03 & 1.89e-03 & 1.62e-02 & 4.54\\
 $256\times256$ & 1.12e-01 & 2.70e-03 & 4.85e-03 & 2.82e-02 & 4.78\\
 $512\times512$ & 5.12e-01 & 6.69e-03 & 7.14e-03 & 4.82e-02 & 1.10e+01\\
 $1024\times1024$ & 3.33 & 4.15e-02 & 3.06e-02 & 1.21e-01 & 1.77e+01\\
    \bottomrule
  \end{tabular}
  \caption{Averaged Run-time performance in second, with statistics calculated over $10$-times runs of the algorithms.}\label{tab:combrit}
\end{table}

\begin{table}[H]
  \centering
  \begin{tabular}{ccccccc}
     - & \multicolumn{6}{c}{CPU time in second (s)}\\
    \toprule
    \multirow{2}{*}{Matrix Size} & \multirow{2}{*}{GJI} & \multirow{2}{*}{RSI} & \multicolumn{4}{c}{BRSI($\gamma=\beta$)} \\
    \cmidrule(lr){4-7} &  &  & $\beta=2^1$ & $\beta=2^2$ & $\beta=2^3$ & $\beta=2^4$ \\
    \midrule
 $16\times16$ & 1.25e-03 & 6.73e-03 & 5.51e-03 & 4.27e-03 & 3.98e-03 & 3.70e-03\\
 $32\times32$ & 1.55e-04 & 2.82e-02 & 7.05e-04 & 6.33e-04 & 5.61e-03 & 1.91\\
 $64\times64$ & 6.84e-04 & 2.60e-01 & 9.38e-04 & 1.39e-03 & 5.50e-03 & 1.90\\
 $128\times128$ & 1.73e-02 & 1.99 & 1.37e-03 & 1.17e-03 & 7.22e-03 & 2.19\\
 $256\times256$ & 2.66e-01 &  1.665e+01 & 3.58e-03 & 5.75e-03 & 1.35e-02 & 2.34\\
 $512\times512$ & 1.97 & 3.09e+02 & 1.36e-02 & 1.49e-02 & 7.45e-02& --\\
 $1024\times1024$ & 2.73e+01 & 3.92e+03 & 8.84e-02 & 9.05e-02 & 2.12e-01& --\\
    \bottomrule
  \end{tabular}
  \caption{Averaged Run-time performance in second, with statistics calculated over $10$-times runs of the algorithms. The \textbf{BRSI} \eqref{algo:brsi} algorithm runs with two blocks i.e. $\gamma=\beta$, that are also used for the inversion while calling the \textbf{COMBRIT} method \ref{algo:COMBRITE}}\label{tab:brsi}
\end{table}

Table \ref{tab:brsi} provides a comprehensive comparison of the CPU time (in seconds) for three different matrix inversion methods: Gauss Jaurgan Inversion (GJI), \textbf{RSI}, and its block version \textbf{BRSI}. The algorithms were evaluated on various matrix sizes, ranging from $16\times16$ to $1024\times1024$. The results were obtained by averaging the run-time performance over $10$ executions of each algorithm.

Upon analyzing the results, it is evident that the \textbf{RSI} method consistently demonstrates slower computation times across all matrix sizes. This is expected since as demonstrated theoretically its complexity is super-cubic. On the other hand, the \textbf{BRSI} method exhibits a notable decrease in CPU time as the matrix size grows, indicating its computational efficiency for larger matrices. However, as the parameter $\gamma=\beta$ increases, representing the size of the blocks used in the recursion in the splitting and combinatorial, the \textbf{BRSI} method achieves increasingly lower CPU times.  

For relatively smaller matrix sizes, the \textbf{BRSI} method with $\beta=2^4$ lags behind \textbf{RSI/GJI} in terms of CPU time, this is due to the complexity overheads of the combinatorics involving more matrix multiplications in its process. However, as the matrix size grows, the \textbf{BRSI} method quickly surpasses \textbf{RSI/GJI}, demonstrating its ability to handle larger-scale computations efficiently.

Notably, for the largest matrix size in the table (1024x1024), the \textbf{BRSI} method achieves a considerable improvement over \textbf{RSI/GJI}, with a significantly lower CPU time. This highlights the effectiveness of the \textbf{BRSI} approach for handling complex and computationally demanding tasks, such as large-scale matrix inversions. The results provide valuable insights into the efficiency of the proposed block method (\textbf{BRSI}) compared to traditional inversion methods. These findings make the \textbf{BRSI} method a promising approach for practical applications that require fast and accurate matrix inversions.

\section{Conclusion}\label{sec:conclude}

In this paper, we have presented novel methods for computing the inverse of non-singular triangular matrices. Our study includes the analysis of several algorithms, namely \textbf{COMBRIT}, \textbf{SQR}, \textbf{SKUL}, and \textbf{BRSI}, which provide efficient and accurate solutions for inverse factorization tasks.
The \textbf{SQR} and \textbf{SKUL} algorithms are specifically designed for the inverse decomposition of QR and LU matrices, respectively. The \textbf{COMBRIT} method utilizes combinatorial calculations based on the indexes of the entries in the initial triangular matrix, enabling a direct computation of its inverse without the need for iterative procedures. On the other hand, the \textbf{BRSI} method employs a matrix splitting approach, where the given square matrix is divided into a sum of triangular matrices. This technique takes advantage of the recurrence provided by \textbf{COMBRIT} to construct the inverse matrix iteratively.
We have conducted a comprehensive analysis of the time complexity of these algorithms, demonstrating their effectiveness across various matrix sizes. The results of numerical tests and implementations indicate that our proposed algorithms outperform traditional techniques, especially for larger matrices. Notably, the \textbf{BRSI} method exhibits promising performance when the parameter $\beta$ is appropriately chosen, making it a valuable tool for practical applications that require efficient and accurate matrix inversions.
Furthermore, our research introduces the concept of combinatorial-based approaches and recurrent techniques for triangular decomposition and inversion. These innovative methods enhance the efficiency and speed of the computations, allowing for more dynamic computation of inverse matrices.

\begin{itemize}
    \item \textbf{CRIT}: Column Recursive inverse of triangular matrices.  
    \item \textbf{COMBRIT}: Combinatorial (Block) recursive inverse of triangular matrices. 
    \item \textbf{SKUL}: Inverse factorization with augmented classical $LU$ factorization. 
    \item \textbf{SQR}: Inverse factorization with augmented classical $QR$ factorization. 
    \item \textbf{BRSI}: Inverse Factorization based on recursive, split, and block inverse of triangular matrices. 
\end{itemize}

\section*{Acknowledgment}
The author would like to acknowledge the support received through the external research grant number 8434000491 at the Emirates Nuclear Technology Center at Khalifa University.

\section*{Data and codes availability}
In the interest of transparency and reproducibility, the MATLAB codes used in this research study, including the implementation of all algorithms, are made publicly available online through the GitHub repository \url{https://github.com/riahimk/Combinatorial_Inversion.git}. This allows researchers and interested parties to access and review the codes, thereby promoting transparency and facilitating the replication of our findings. 

\section*{Appendix}
\appendix{Complexity calculation for \textbf{SRI}}

We breakdown the calculation of the complexity formula Eq.\eqref{complexitySRI} as follows:  
Using the fact that 
\begin{eqnarray}\label{prodUL00}
    \Pul(m 2^{k+1}) &=& 2 \Pul(m 2^{k}) + \Pff(m 2^{k}) + 2\Ptf(m 2^{k}) \notag\\
    &=& 2^{k+1} \Pul(m) + \sum_{j=0}^{k}2^{j}\Pff(m 2^{k-j}) + 2^{j+1}\Ptf(m 2^{k-j}) \notag\\
    &=& 2^{k+1} \Pul(m) + \sum_{j=0}^{k} 2^{j} \left( (5+2m)m^{2}7^{k-j}-6(m2^{k-j})^{2}\right)\notag\\
     &&+ \sum_{j=0}^{k}2^{j+1} \left( \left(m^{3}-\frac{13}{2}m^{2}-\dfrac{1}{3}\right) 2^{2(k-j)} +m^{2}\frac{15}{2}2^{(k-j)}
    +\frac{7^{(k-j)}}{3} \right)\notag\\
&=& 2^{k+1} \Pul(m) +  \dfrac{m^{2}(2m+5)}{5} 7^{k+1} -\dfrac{2m^{2}(m-5)}{5}2^{k+1} -3\cdot m^{2} 4^{k+1}\notag\\
&& + \left(m^{3}-\frac{13}{2}m^{2}-\dfrac{1}{3}\right)(4^{k+1}-2^{k+1})
     + (1+k)\frac{15m^{2}}{2} 2^{k+1}    \notag\\ 
      && + \dfrac{2\cdot 7^{k+1}-2^{k+2}}{15}  \notag\\
&=&   \dfrac{6m^{3}+15m^{2}+2}{15} 7^{k+1}+ \left(m^{3}-\frac{19}{2}m^{2}-\dfrac{1}{3}\right)4^{k+1} \notag\\
&&\left(\Pul(m)-\dfrac{14m^3-75km^2-160m^2-2}{10}\right) 2^{k+1} 
\end{eqnarray}
Knowing that, and by promoting the sparsity, the product of an upper triangular matrix of size $m\times m$ with a lower triangular matrix $L$ of size $m\times m$, where $L$ has diagonal entry zeros requires ${\left(m-1\right)m^2}/{3}$ multiplications and $\left({5 m^{3}-27m^{2}+46m-24}\right)/{6}$ additions. Hence, 
$$
\Pul(m)=\dfrac{7m^3-27m^2+44m-24}{6}.$$

\begin{eqnarray}\label{prodUL}
    &&\Pul(m 2^{k}) \notag\\
    &=& \left(\dfrac{6m^{3}+15m^{2}+2}{15}\right) 7^{k}+ \left(m^{3}-\frac{19}{2}m^{2}-\dfrac{1}{3}\right)4^{k} \notag\\
    &&+ \left(\dfrac{-7m^3+345m^2+220m+225km^2-114}{30}\right) 2^{k} \notag\\
    &\leq&  
    \left(\dfrac{6(\frac{n}{2^{k}}+1)^{3}+15(\frac{n}{2^{k}}+1)^{2}+2}{15}\right) 7^{k}
   +  \left((\frac{n}{2^{k}}+1)^{3}-\frac{19}{2}(\frac{n}{2^{k}}+1)^{2}-\dfrac{1}{3}\right)4^{k} \notag\\
  &&+\left(\dfrac{-7(\frac{n}{2^{k}}+1)^3+345(\frac{n}{2^{k}}+1)^2+220(\frac{n}{2^{k}}+1)+225k(\frac{n}{2^{k}}+1)^2-114}{30}\right) 2^{k} \notag\\
    &\leq&  
   \left( \frac{2}{5}\frac{n^3}{8^k}+\frac{11}{5}\frac{n^2}{4^k}+\frac{16}{5}\frac{n}{2^{k}}+\frac{23}{15} \right) 7^{k} 
   +  \left(\frac{n^3}{8^k} - \frac{13}{2}\frac{n^{2}}{ 4^k} -16\frac{n}{2^{k}}-\frac{53}{6} \right)4^{k} \notag\\
  &&+\left(-\frac{7}{30}\frac{n^3}{8^{k}}+\frac{75 k+108}{10}\frac{n^2}{4^{k}} 
  + \left( 15k+\frac{889}{30} \right) \frac{n}{2^{k}}+\frac{148+75k}{10}\right) 2^{k} \notag\\
    &=&  
   \left( \frac{2}{5}\left(\frac{8}{7}\right)^{\log_{2}(n)-k}+\frac{11}{5}\left(\frac{4}{7}\right)^{\log_{2}(n)-k}+\frac{16}{5}\left(\frac{2}{7}\right)^{\log_{2}(n)-k}+\frac{23}{15} \left(\frac{1}{7}\right)^{\log_{2}(n)-k}\right) n^{\log_{2}(7)} \notag\\
  &&+  \left(2^{\log_{2}(n)-k} - \frac{13}{2}  -16\left(\frac{1}{2}\right)^{\log_{2}(n)-k}-\frac{53}{6}\left(\frac{1}{4}\right)^{\log_{2}(n)-k} \right)n^{2} \notag\\
  &&+\left(-\frac{7}{30}4^{\log_{2}(n)-k}+\frac{75 k+108}{10} 2^{\log_{2}(n)-k}
  + \left( 15k+\frac{889}{30} \right) +\frac{148+75k}{10}\left(\frac{1}{2}\right)^{\log_{2}(n)-k}\right) n \notag\\
      &\leq&  
   \left( \frac{2}{5}\left(\frac{8}{7}\right)^5+\frac{11}{5}\left(\frac{4}{7}\right)^5+\frac{16}{5}\left(\frac{2}{7}\right)^5+\frac{23}{15} \left(\frac{1}{7}\right)^5\right) n^{\log_{2}(7)} \notag\\
  &&+  \left(2^5 - \frac{13}{2}  -16\left(\frac{1}{2}\right)^5-\frac{53}{6}\left(\frac{1}{4}\right)^5 \right)n^{2} \notag\\
  &&+\left(-\frac{7}{30}4^5+\frac{75 k+108}{10} 2^5
  + \left( 15k+\frac{889}{30} \right) +\frac{148+75k}{10}\left(\frac{1}{2}\right)^5\right) n \notag\\
     &\leq&  
   \left( \frac{33137}{36015}\right) n^{\log_{2}(7)}
  +  \left(\frac{153547}{6144} \right)n^{2} +\frac{16335}{64} n\log_{2}(n) +\left(\frac{10941}{80}\right) n  \notag\\
       &\leq&   0.921 \cdot  n^{\log_{2}(7)} +  23.8 \cdot n^{2} +255.235 \cdot n\log_{2}(n) +136.763 \cdot n \label{pulm2k}
\end{eqnarray}

We note that this formula uses sub-blocks of size $m 2^{k}$ from the initial order $m 2^{k+1}$. Despite this fact, the matrix multiplication operation still works for the matrices of order $m2^{k+1}-j$ by the simple fact that we can adjust the size of the new matrix accordingly by adding necessary columns formed by the canonical basis. We  have the following upper bound for the time complexity
\begin{eqnarray*}
    \Invf^\textbf{SRI}(m 2^{k+1})&\leq& \sum_{j=0}^{m 2^{k+1}-1} \left(\Invt^\textbf{CRIT}(m 2^{k+1}-j) + \Pul(m 2^{k+1}) +  m 2^{k+1}-j  \right)\\
    &=& \sum_{j=0}^{m 2^{k+1}-1} \left(\dfrac{2}{15} 7^{k+1}  +  m 2^{k+1}-j  \right)\\
    && +\sum_{j=0}^{m 2^{k+1}-1} 2 \Pul(m 2^{k}) + 2\Pff(m 2^{k}) + \Ptf(m 2^{k})\\
    &=& \sum_{j=0}^{m 2^{k+1}-1} \left(\dfrac{6m^{3}+15m^{2}+4}{15} 7^{k+1} +\left(m^{3}-\frac{19}{2}m^{2}-\dfrac{1}{3}\right)4^{k+1} \right)\\
    && +\sum_{j=0}^{m 2^{k+1}-1} \left( \left(m+\Pul(m)-\dfrac{14m^3-75km^2-160m^2-2}{10}\right) 2^{k+1} -j \right)
\end{eqnarray*}

\appendix{Complexity calculation for formula Eq.\eqref{ubnd1}} 

\begin{eqnarray*}
&&\sum_{j=0}^{\gamma-1} \Invt^\textbf{CRIT}((\gamma-j) q 2^{k})   \\
&=&\sum_{j=0}^{\gamma-1} \dfrac{4m^2 (5 + 2 m)}{18}\:7^{k} -  \dfrac{2m^3 + k 15m^2}{18}\: 4^{k}  
     +\dfrac{m^3-3m^2+8m-3}{3}\:2^{k}-\dfrac{m^3+2m^2 (5 + 2 m)}{9}.\\
    &=& \sum_{j=0}^{\gamma-1} \left( \frac{4m^3}{9}+\frac{10m^2}{9} \right) \:7^{k} 
     -\sum_{j=0}^{\gamma-1}\left(\frac{m^3}{9}+\frac{k^{15}m^2}{18}\right)\: 4^{k} 
     +\sum_{j=0}^{\gamma-1}\left(\frac{m^3}{3}-m^2+\frac{8m}{3}-1\right)\:2^{k} 
     -\sum_{j=0}^{\gamma-1} \left( \frac{5m^3}{9}+\frac{10m^2}{9} \right).\\
    &=& \sum_{j=0}^{\gamma-1} \left( \frac{4}{9}q^{3}(\gamma-j)^{3}+\frac{10}{9}q^{2}(\gamma-j)^{2} \right) \:7^{k} 
     -\sum_{j=0}^{\gamma-1}\left(\frac{q^{3}}{9}(\gamma-j)^{3}+\frac{k^{15}}{18}q^{2}(\gamma-j)^{2}\right)\: 4^{k} \\
     &&+\sum_{j=0}^{\gamma-1}\left(\frac{q^{3}}{3}(\gamma-j)^{3}-q^{2}(\gamma-j)^{2}+\frac{8q}{3}(\gamma-j)-1\right)\:2^{k} 
     -\sum_{j=0}^{\gamma-1} \left( \frac{5q^{3}}{9}(\gamma-j)^{3}+\frac{10q^2}{9}(\gamma-j)^{2} \right).\\   
    &=&   \left( \frac{4}{9}q^{3}\mathcal{Q}_{3}(\gamma)+\frac{10}{9}q^{2}\mathcal{Q}_{2}(\gamma) \right) \:7^{k} 
     - \left(\frac{q^{3}}{9}\mathcal{Q}_{3}(\gamma)+\frac{k15}{18}q^{2}\mathcal{Q}_{2}(\gamma)\right)\: 4^{k} \\
     &&+ \left(\frac{q^{3}}{3}\mathcal{Q}_{3}(\gamma)-q^{2}\mathcal{Q}_{2}(\gamma)+\frac{8q}{3}\mathcal{Q}_{1}(\gamma)-\gamma\right)\:2^{k} 
     -  \left( \frac{5q^{3}}{9}\mathcal{Q}_{3}(\gamma)+\frac{10q^2}{9}\mathcal{Q}_{2}(\gamma) \right).\\
&\leq&   \left( \frac{1}{9}\frac{\left(\gamma+1\right)^2}{\gamma} \left(\frac{n^3}{8^{k}}+3\cdot \:\frac{n^2}{4^{k}}+3\cdot \frac{n}{2^{k}}+1 \right)+\frac{5}{27}\frac{\left(2\gamma+1\right)\left(\gamma+1\right)}{\gamma} \left(\frac{n^2}{4^k}+\frac{2n}{2^k}+1 \right) \right) \:7^{k} \\
     &&- \left(\frac{1}{36}\frac{\left(\gamma+1\right)^2}{\gamma} \left(\frac{n^3}{8^{k}}+3\cdot \:\frac{n^2}{4^{k}}+3\cdot \frac{n}{2^{k}}+1 \right)+\frac{k 5}{36}\frac{\left(2\gamma+1\right)\left(\gamma+1\right)}{6\gamma} \left(\frac{n^2}{4^k}+\frac{2n}{2^k}+1 \right)\right)\: 4^{k} \\
     &&+ \left(\frac{1}{12}\frac{\left(\gamma+1\right)^2}{\gamma} \left(\frac{n^3}{8^{k}}+3\cdot \:\frac{n^2}{4^{k}}+3\cdot \frac{n}{2^{k}}+1 \right)-\frac{\left(2\gamma+1\right)\left(\gamma+1\right)}{6\gamma} \left(\frac{n^2}{4^k}+\frac{2n}{2^k}+1 \right)+\frac{1-2\gamma}{3}\right)\:2^{k} \\
     &&-  \left( \frac{5}{36}\frac{\left(\gamma+1\right)^2}{\gamma} \left(\frac{n^3}{8^{k}}+3\cdot \:\frac{n^2}{4^{k}}+3\cdot \frac{n}{2^{k}}+1 \right)+\frac{5}{27}\frac{\left(2\gamma+1\right)\left(\gamma+1\right)}{\gamma} \left(\frac{n^2}{4^k}+\frac{2n}{2^k}+1 \right) \right).
\end{eqnarray*}     
\begin{eqnarray*}
&\leq&   \left( \frac{1}{9}\frac{\left(\gamma+1\right)^2}{\gamma} \left(\left(\frac{8}{7}\right)^{\log_{2}(n)-k}+3\cdot \:\left(\frac{4}{7}\right)^{\log_{2}(n)-k}+3\cdot \left(\frac{2}{7}\right)^{\log_{2}(n)-k}+\left(\frac{1}{7}\right)^{\log_{2}(n)-k} \right) \right) \:n^{\log_{2}(7)} \\
&& +\left( \frac{5}{27}\frac{\left(2\gamma+1\right)\left(\gamma+1\right)}{\gamma} \left(\left(\frac{4}{7}\right)^{\log_{2}(n)-k}+2\left(\frac{2}{7}\right)^{\log_{2}(n)-k}+\left(\frac{1}{7}\right)^{\log_{2}(n)-k} \right) \right) \:n^{\log_{2}(7)} \\
     &&- \left(\frac{1}{36}\frac{\left(\gamma+1\right)^2}{\gamma} \left(2^{\log_{2}(n)-k}+3+3\left(\frac{1}{2}\right)^{\log_{2}(n)-k}+\left(\frac{1}{4}\right)^{\log_{2}(n)-k} \right) \right)\: n^{2} \\
     && - \left( \frac{k 5}{36}\frac{\left(2\gamma+1\right)\left(\gamma+1\right)}{6\gamma} \left(1+2\left(\frac{1}{2}\right)^{\log_{2}(n)-k}+\left(\frac{1}{4}\right)^{\log_{2}(n)-k} \right)\right)\: n^{2} \\
     &&+ \left(\frac{1}{12}\frac{\left(\gamma+1\right)^2}{\gamma} \left(4^{\log_{2}(n)-k} \right)+3\cdot 2^{\log_{2}(n)-k}  +3+ \left(\frac{1}{2}\right)^{\log_{2}(n)-k}   \right)\: n \\
     &&-\left(\frac{\left(2\gamma+1\right)\left(\gamma+1\right)}{6\gamma} \left(2^{\log_{2}(n)-k} +1+\left(\frac{1}{2}\right)^{\log_{2}(n)-k}  \right)+\frac{1-2\gamma}{3}\right)\: n \\
     &&-  \frac{5}{36}\frac{\left(\gamma+1\right)^2}{\gamma} \left(8^{\log_{2}(n)-k}+3 \cdot 4^{\log_{2}(n)-k}+3\cdot 2^{\log_{2}(n)-k}+1 \right)  \\
     &&-\frac{5}{27}\frac{\left(2\gamma+1\right)\left(\gamma+1\right)}{\gamma} \left(4^{\log_{2}(n)-k}+2\cdot 2^{\log_{2}(n)-k}+1 \right)\\ 
&\leq&   \left( \frac{1}{9}\frac{\left(\gamma+1\right)^2}{\gamma} \left(\left(\frac{8}{7}\right)^5+3\cdot \:\left(\frac{4}{7}\right)^5+3\cdot \left(\frac{2}{7}\right)^5+\left(\frac{1}{7}\right)^5 \right) \right) \:n^{\log_{2}(7)} \\
&& +\left( \frac{5}{27}\frac{\left(2\gamma+1\right)\left(\gamma+1\right)}{\gamma} \left(\left(\frac{4}{7}\right)^5+2\left(\frac{2}{7}\right)^5+\left(\frac{1}{7}\right)^5 \right) \right) \:n^{\log_{2}(7)} \\
     &&- \left(\frac{1}{36}\frac{\left(\gamma+1\right)^2}{\gamma} \left(2^4+3+3\left(\frac{1}{2}\right)^4+\left(\frac{1}{4}\right)^4 \right) \right)\: n^{2} \\
     && - \left( \frac{10}{36}\frac{\left(2\gamma+1\right)\left(\gamma+1\right)}{6\gamma} \left(1+2\left(\frac{1}{2}\right)^4+\left(\frac{1}{4}\right)^4 \right)\right)\: n^{2} \\
     &&+ \left(\frac{1}{12}\frac{\left(\gamma+1\right)^2}{\gamma} \left(4^5 \right)+3\cdot 2^5  +3+ \left(\frac{1}{2}\right)^5   \right)\: n \\
     &&-\left(\frac{\left(2\gamma+1\right)\left(\gamma+1\right)}{6\gamma} \left(2^4 +1+\left(\frac{1}{2}\right)^4  \right)+\frac{1-2\gamma}{3}\right)\: n \\
     &&-  \frac{5}{36}\frac{\left(\gamma+1\right)^2}{\gamma} \left(8^4+3 \cdot 4^4+3\cdot 2^4+1 \right)  \\
     &&-\frac{5}{27}\frac{\left(2\gamma+1\right)\left(\gamma+1\right)}{\gamma} \left(4^4+2\cdot 2^4+1 \right)
\end{eqnarray*}

Finally, 
\begin{eqnarray*}
\sum_{j=0}^{\gamma-1} \Invt^\textbf{CRIT}((\gamma-j) q 2^{k} &\leq&   \left(\frac{121\left(109\gamma+104\right)\left(\gamma+1\right)}{50421\gamma}\right) \:n^{\log_{2}(7)}  
     - \left( \frac{289\left(61\gamma+56\right)\left(\gamma+1\right)}{27648\gamma} \right)\: n^{2} \\
     &&+ \left( \frac{3169}{32}+\frac{7582\gamma^2+15597\gamma+7919}{96\gamma}\right)\: n  
      -  \frac{1445\left(59\gamma+55\right)\left(\gamma+1\right)}{108\gamma}.
\end{eqnarray*}

\begin{equation}
 \sum_{j=0}^{\gamma-1} \Invt^\textbf{CRIT}((\gamma-j) q 2^{k}\leq   1.16\cdot  n^{\log_{2}(7)}  - 2.78\cdot n^{2}+ 461\cdot n  -  3472.      
\end{equation}

\appendix{Complexity calculation for formula Eq.\eqref{ubnd2}} 

\begin{eqnarray*}
 &&\sum_{j=0}^{\gamma-1}\Pul\left((\gamma-j) q 2^{k}\right) \\
 &=& \sum_{j=0}^{\gamma-1} \left(\dfrac{6(\gamma-j)^{3}q^{3}+15(\gamma-j)^{2}q^{2}+2}{15}\right) 7^{k}+ \left((\gamma-j)^{3}q^{3}-\frac{19}{2}(\gamma-j)^{2}q^{2}-\dfrac{1}{3}\right)4^{k} \notag\\
    &&+ \left(\dfrac{-7(\gamma-j)^{3}q^3+345(\gamma-j)^{2}q^2+220(\gamma-j)q+225k(\gamma-j)^{2}q^2-114}{30}\right) 2^{k} \\
&=& \sum_{j=0}^{\gamma-1} \left(\frac{2}{5}q^{3}(\gamma-j)^{3}+q^{2}(\gamma-j)^{2}+\frac{2}{15}\right) 7^{k}+ \left((\gamma-j)^{3}q^{3}-\frac{19}{2}(\gamma-j)^{2}q^{2}-\dfrac{1}{3}\right)4^{k} \notag\\
    &&+ \left(\dfrac{-7}{30}q^{3}(\gamma-j)^{3}+\dfrac{345}{30}q^{2}(\gamma-j)^{2}+\dfrac{220}{30}q (\gamma-j)+\dfrac{225k}{30}q^{2}(\gamma-j)^{2}-\dfrac{114}{30}\right) 2^{k} \\
&=&  \left(\frac{2}{5}\mathcal{Q}_{3}(\gamma) q^{3}+\mathcal{Q}_{2}(\gamma)q^{2}+\frac{2}{15}\gamma\right) 7^{k}+ \left( \mathcal{Q}_{3}(\gamma) q^{3}-\frac{19}{2}\mathcal{Q}_{2}(\gamma)q^{2}-\dfrac{1}{3}\gamma\right)4^{k} \notag\\
    &&+ \left(\dfrac{-7}{30}\mathcal{Q}_{3}(\gamma)q^{3}+\dfrac{345}{30}\mathcal{Q}_{2}(\gamma)q^{2}+\dfrac{220}{30}\mathcal{Q}_{1}(\gamma) q+\dfrac{225k}{30}\mathcal{Q}_{2}(\gamma)q^{2}-\dfrac{114}{30}\gamma\right) 2^{k} \\  
&=&  \left(\frac{2}{5}\mathcal{Q}_{3}(\gamma) q^{3}+\mathcal{Q}_{2}(\gamma)q^{2}+\frac{2}{15}\gamma\right) 7^{k}+ \left( \mathcal{Q}_{3}(\gamma) q^{3}-\frac{19}{2}\mathcal{Q}_{2}(\gamma)q^{2}-\dfrac{1}{3}\gamma\right)4^{k} \notag\\
    &&+ \left(\dfrac{-7}{30}\mathcal{Q}_{3}(\gamma)q^{3}+\dfrac{345}{30}\mathcal{Q}_{2}(\gamma)q^{2}+\dfrac{220}{30}\mathcal{Q}_{1}(\gamma) q-\dfrac{114}{30}\gamma\right) 2^{k} 
    +\left(\dfrac{225k}{30}\mathcal{Q}_{2}(\gamma)q^{2} \right)\: 2^{k}\\
&\leq&  
   \frac{\left(\gamma+1\right)^2}{10\gamma} \left(\frac{n^3}{8^{k}}+3\cdot \:\frac{n^2}{4^{k}}+3\cdot \frac{n}{2^{k}}+1\right)\: 7^{k} +\left(\frac{\left(2\gamma+1\right)\left(\gamma+1\right)}{6\gamma} \left(\frac{n^2}{4^k}+\frac{2n}{2^k}+1 \right) + \frac{2\gamma}{15} \right) \: 7^{k} \\ 
 &&+ \left(  \frac{\left(\gamma+1\right)^2}{4\gamma}\left(\frac{n^3}{8^{k}}+3\cdot \:\frac{n^2}{4^{k}}+3\cdot \frac{n}{2^{k}}+1 \right)
 -\frac{19}{12} \frac{\left(2\gamma+1\right)\left(\gamma+1\right)}{\gamma}\left(\frac{n^4}{4^k}+\frac{2n}{2^k}+1 \right)
 -\frac{\gamma}{3} \right)\: 2^{k}\\
 &&+ \left(\dfrac{-7}{30}\frac{\left(\gamma+1\right)^2}{\gamma} \left(\frac{n^3}{8^{k}}+3\cdot \:\frac{n^2}{4^{k}}+3\cdot \frac{n}{2^{k}}+1 \right) 
+\dfrac{345}{180} \frac{\left(2\gamma+1\right)\left(\gamma+1\right)}{\gamma} \left(\frac{n^2}{4^k}+\frac{2n}{2^k}+1 \right)\right) 2^{k} \\  
&&+ \left( \dfrac{220}{30} \frac{\gamma+1}{2} (\frac{n}{2^{k}}+1) -\dfrac{114}{30}\gamma \right) 2^{k} + \left(\dfrac{225k}{180}\frac{\left(2\gamma+1\right)\left(\gamma+1\right)}{\gamma}\left(\frac{n^2}{4^k}+\frac{2n}{2^k}+1 \right) \right) 2^{k}\\
&=&  
\frac{\left(\gamma+1\right)^2}{10\gamma} \left( \left(\frac{8}{7}\right)^{\log_{2}(n)-k}+3\cdot \left(\frac{4}{7}\right)^{\log_{2}(n)-k}+3\cdot\left(\frac{2}{7}\right)^{\log_{2}(n)-k}+\left(\frac{1}{7}\right)^{\log_{2}(n)-k}\right)\: n^{\log_{2}(7)} \\ 
&&+\frac{\left(2\gamma+1\right)\left(\gamma+1\right)}{6\gamma} \left(
\left(\frac{4}{7}\right)^{\log_{2}(n)-k}
     +2\left(\frac{2}{7}\right)^{\log_{2}(n)-k}
     + \left(\frac{1}{7}\right)^{\log_{2}(n)-k} \right) \: n^{\log_{2}(7)} 
+\left( \frac{2\gamma}{15} \left(\frac{1}{7}\right)^{\log_{2}(n)-k} \right) \: n^{\log_{2}(7)}  \\
&&+ \left(  \frac{\left(\gamma+1\right)^2}{4\gamma}\left(2^{\log_{2}(n)-k}+3\cdot \left(\frac{4}{4}\right)^{\log_{2}(n)-k}+3\cdot \left(\frac{1}{2}\right)^{\log_{2}(n)-k}+\left(\frac{1}{4}\right)^{\log_{2}(n)-k} \right) \right) n^{2} \\
 &&-\left( \frac{19}{12} \frac{\left(2\gamma+1\right)\left(\gamma+1\right)}{\gamma}\left(\left(\frac{1}{1}\right)^{\log_{2}(n)-k}+\left(\frac{1}{2}\right)^{\log_{2}(n)-k}+\left(\frac{1}{4}\right)^{\log_{2}(n)-k} \right)
 -\frac{\gamma}{3} \right)\: n^{2}\\
 &&+ \left(\dfrac{-7}{30}\frac{\left(\gamma+1\right)^2}{\gamma} \left(4^{\log_{2}(n)-k}+3\cdot 2^{\log_{2}(n)-k}+3 +\left(\frac{1}{2}\right)^{\log_{2}(n)-k} \right) \right)\:n\\ 
 &&+\left(\dfrac{345}{180} \frac{\left(2\gamma+1\right)\left(\gamma+1\right)}{\gamma} \left(2^{\log_{2}(n)-k}+1+\left(\frac{1}{2}\right)^{\log_{2}(n)-k} \right)\right) n \\  
&&+ \left( \dfrac{220}{30} \frac{\gamma+1}{2} (1+\left(\frac{1}{2}\right)^{\log_{2}(n)-k}) -\dfrac{114}{30}\gamma \left(\frac{1}{2}\right)^{\log_{2}(n)-k}\right) n \\
&&+ \left(\dfrac{225}{180}\frac{\left(2\gamma+1\right)\left(\gamma+1\right)}{\gamma}\left(2^{\log_{2}(n)-k}+1+\left(\frac{1}{2}\right)^{\log_{2}(n)-k} \right) \right) n \log_{2}(n)
\end{eqnarray*}

\begin{eqnarray*}
&\leq&  
\frac{\left(\gamma+1\right)^2}{10\gamma} \left( \left(\frac{8}{7}\right)^{5}+3\cdot \left(\frac{4}{7}\right)^{5}+3\cdot\left(\frac{2}{7}\right)^{5}+\left(\frac{1}{7}\right)^{5}\right)\: n^{\log_{2}(7)} \\ 
&&+\left( \frac{\left(2\gamma+1\right)\left(\gamma+1\right)}{6\gamma} \left(
\left(\frac{4}{7}\right)^{5}
     +2\left(\frac{2}{7}\right)^{5}
     + \left(\frac{1}{7}\right)^{5} \right)  
+ \frac{2\gamma}{15} \left(\frac{1}{7}\right)^{5} \right) \: n^{\log_{2}(7)} \\ 
&&+ \left(  \frac{\left(\gamma+1\right)^2}{4\gamma}\left(2^{5}+3\cdot \left(\frac{4}{4}\right)^{5}+3\cdot \left(\frac{1}{2}\right)^{5}+\left(\frac{1}{4}\right)^{5} \right) \right) n^{2} \\
 &&-\left( \frac{19}{12} \frac{\left(2\gamma+1\right)\left(\gamma+1\right)}{\gamma}\left(\left(\frac{1}{1}\right)^{5}+\left(\frac{1}{2}\right)^{5}+\left(\frac{1}{4}\right)^{5} \right)
 +\frac{\gamma}{3} \right)\: n^{2}\\
 &&+ \left(\dfrac{-7}{30}\frac{\left(\gamma+1\right)^2}{\gamma} \left(4^{5}+3\cdot 2^{5}+3 +\left(\frac{1}{2}\right)^{5} \right) \right)\:n\\ 
 &&+\left(\dfrac{345}{180} \frac{\left(2\gamma+1\right)\left(\gamma+1\right)}{\gamma} \left(2^{5}+1+\left(\frac{1}{2}\right)^{5} \right)\right) n \\  
&&+ \left( \dfrac{220}{30} \frac{\gamma+1}{2} (1+\left(\frac{1}{2}\right)^{5}) -\dfrac{114}{30}\gamma \left(\frac{1}{2}\right)^{5}\right) n \\
&&+ \left(\dfrac{225}{180}\frac{\left(2\gamma+1\right)\left(\gamma+1\right)}{\gamma}\left(2^{5}+1+\left(\frac{1}{2}\right)^{5} \right) \right) n \log_{2}(n)
\end{eqnarray*}
Finally, we have 
\begin{eqnarray*}
    \sum_{j=0}^{\gamma-1}\Pul\left((\gamma-j) q 2^{k}\right) &\leq&  
\underbrace{\left( 
\frac{\left(\gamma+1\right)^2}{10\gamma} \frac{35937}{16807}
+\frac{\left(2\gamma+1\right)\left(\gamma+1\right)}{6\gamma}
\frac{1089}{16807}+ \frac{2\gamma}{252105}
\right)}_{\theta_\text{a}(\gamma)} \: n^{\log_{2}(7)} \\ 
&&+ \underbrace{\left(  \frac{\left(\gamma+1\right)^2}{4\gamma}\frac{35937}{1024}  
 -  \frac{19}{12} \frac{\left(2\gamma+1\right)\left(\gamma+1\right)}{\gamma}\frac{1057}{1024}
 -\frac{\gamma}{3} \right)}_{\theta_\text{b}(\gamma)}\: n^{2} \\
 &&+ \underbrace{\left(\frac{\left(2\gamma+1\right)\left(\gamma+1\right)}{\gamma}\dfrac{225}{180}\frac{1057}{32} \right)}_{\theta_\text{c}(\gamma)} n \log_{2}(n) \\
 && + \underbrace{\left(\frac{-260008\gamma^2-641571\gamma-381563}{1920\gamma} +  \frac{293\gamma}{80}+\frac{121}{32}\right)}_{\theta_\text{d}(\gamma)} n \\
&&= \theta_\text{a}(\gamma) n^{\log_{2}(7)}+\theta_\text{b}(\gamma) n^{2} + \theta_\text{c}(\gamma) n\log_{2}(n) + \theta_\text{d}(\gamma) n
\end{eqnarray*}
Hence, 
\begin{equation*}
    \sum_{j=0}^{\gamma-1}\Pul\left((\gamma-j) q 2^{k}\right) \leq  \theta_\text{a}(\gamma) n^{\log_{2}(7)}+\theta_\text{b}(\gamma) n^{2} + \theta_\text{c}(\gamma) n\log_{2}(n) + \theta_\text{d}(\gamma) n
\end{equation*}
Besides, for a matrix of order $n$ a Permutation involves $n(n-1)/2$ comparisons and $n(n-1)$ columns interchanges.

\appendix{Complexity calculation for formula Eq.\eqref{ubnd3}} 

\begin{eqnarray*}
\sum_{j=0}^{\gamma-1} \P((\gamma-j) q 2^{k}) &=& \frac{3}{2} \sum_{j=0}^{\gamma-1} ((\gamma-j) q 2^{k})^{2} - (\gamma-j) q 2^{k}\\
&=&  \frac{3}{2} \sum_{j=0}^{\gamma-1} (\gamma-j)^{2} q^{2} 4^{k} - (\gamma-j) q 2^{k}\\
&=&  \frac{3}{2}  \mathcal{Q}_{2}(\gamma) q^{2} 4^{k} - \mathcal{Q}_{1}(\gamma) q 2^{k}\\
&\leq&  \frac{3}{12}  \frac{\left(2\gamma+1\right)\left(\gamma+1\right)}{ \gamma} \left(\frac{n^2}{4^k}+\frac{2n}{2^k}+1 \right)  4^{k} - \frac{3}{4}(\gamma+1) (\frac{n}{2^{k}}+1) 2^{k}\\
&=&  \frac{3}{12}  \frac{\left(2\gamma+1\right)\left(\gamma+1\right)}{ \gamma} \left( 1 + 2\left(\frac{1}{2}\right)^{\log_{2}(n)-k}+\left(\frac{1}{4}\right)^{\log_{2}(n)-k} \right)  n^{2} \\&&- \frac{3}{4}(\gamma+1) \left(1+\left(\frac{1}{2}\right)^{\log_{2}(n)-k}\right) n\\
&\leq&  \frac{1089}{4096} \frac{\left(2\gamma+1\right)\left(\gamma+1\right)}{ \gamma}   n^{2} -\frac{99}{128}(\gamma+1)  n\\
&\leq&  2 n^{2} -3.32 \cdot n
\end{eqnarray*}

\bibliographystyle{plain}
\bibliography{bib_InverseDecomp.bib}
\end{document}